\newtheorem{thm}{Theorem}[section]
\newtheorem{cor}[thm]{Corollary}
\newtheorem{lem}[thm]{Lemma}
\newtheorem{prop}[thm]{Proposition}
\theoremstyle{definition}
\newtheorem{dfn}[thm]{Definition}
\newtheorem{ex}[thm]{Example}
\newtheorem{nota}[thm]{Notation}
\theoremstyle{remark}
\newtheorem*{ac}{Acknowlegments}
\newtheorem*{conv}{Convention}
\numberwithin{equation}{thm}
\def\A{\mathrm{A}}
\def\cm{\operatorname{\mathsf{CM}}}
\def\cmo{\operatorname{\mathsf{\underline{CM}^o}}}
\def\Cok{\mathrm{Coker}}
\def\D{\mathrm{D}}
\def\db{\operatorname{\mathsf{D^b}}}
\def\ds{\operatorname{\mathsf{D_{sg}}}}
\def\dso{\operatorname{\mathsf{D_{sg}^o}}}
\def\gt{\operatorname{gt}}
\def\ind{\operatorname{\mathsf{ind}}}
\def\lcm{\operatorname{\mathsf{\underline{CM}}}}
\def\lv{\operatorname{level}}
\def\m{\mathfrak{m}}
\def\os{\operatorname{OSpec}}
\def\perf{\operatorname{\mathsf{perf}}}
\def\p{\mathfrak{p}}
\def\s{{R^\sharp}}
\def\T{\mathcal{T}}
\def\syz{\Omega}
\def\Tor{\operatorname{Tor}}
\def\v{\operatorname{V}}
\def\X{\mathcal{X}}
\def\Y{\mathcal{Y}}
\def\Z{\mathbb{Z}}
\begin{document}
\setlength{\baselineskip}{15pt}
\title[Singularity categories of hypersurfaces of countable type]{Generation in singularity categories of hypersurfaces of countable representation type}
\author{Tokuji Araya}
\address[T. Araya]{Department of Applied Science, Faculty of Science, Okayama University of Science, Ridaicho, Kitaku, Okayama 700-0005, Japan}
\email{araya@das.ous.ac.jp}
\author{Kei-ichiro Iima}
\address[K.-i. Iima]{Department of Liberal Studies, National Institute of Technology, Nara College, 22 Yata-cho, Yamatokoriyama, Nara 639-1080, Japan}
\email{iima@libe.nara-k.ac.jp}
\author{Maiko Ono}
\address[M. Ono]{Department of Mathematics, Okayama University, Okayama 700-8530, Japan}
\email{onomaiko@s.okayama-u.ac.jp}
\author{Ryo Takahashi}
\address[R. Takahashi]{Graduate School of Mathematics, Nagoya University, Furocho, Chikusaku, Nagoya 464-8602, Japan/Department of Mathematics, University of Kansas, Lawrence, KS 66045-7523, USA}
\email{takahashi@math.nagoya-u.ac.jp}
\urladdr{https://www.math.nagoya-u.ac.jp/~takahashi/}
\thanks{2010 {\em Mathematics Subject Classification.} 13D09, 13C14, 16G60}
\thanks{{\em Key words and phrases.} hypersurface, countable representation type, singularity category, Cohen--Macaulay module, level, Rouquier dimension}
\thanks{RT was partly supported by JSPS Grants-in-Aid for Scientific Research 16K05098 and 16KK0099. MO was partly supported by Foundation of Research Fellows, The Mathematical Society of Japan}
\begin{abstract}
The Orlov spectrum and Rouquier dimension are invariants of a triangulated category to measure how big the category is, and they have been studied actively.
In this paper, we investigate the singularity category $\ds(R)$ of a hypersurface $R$ of countable representation type.
For a thick subcategory $\T$ of $\ds(R)$ and a full subcategory $\X$ of $\T$, we calculate the Rouquier dimension of $\T$ with respect to $\X$.
Furthermore, we prove that the level in $\ds(R)$ of the residue field of $R$ with respect to each nonzero object is at most one.
\end{abstract}
\maketitle
\section{Introduction}

The {\em Olrov spectrum} of a triangulated category is introduced by Orlov \cite{O} based on the works of Bondal and Van den Bergh \cite{BV} and Rouquier \cite{R}.
This categorical invariant is the set of finite generation times of objects of the triangulated category.
The generation time of an object is the number of exact triangles necessary to build the category out of the object, up to finite direct sums, direct summands and shifts.
In \cite{R}, Rouquier studies the infimum of the Orlov spectrum, which is called the {\em Rouquier dimension} of the triangulated category.
For more information on Orlov spectra and Rouquier dimensions, we refer the reader to \cite{BFK, O, R} for instance.

The Orlov spectrum and Rouquier dimension measure how big a triangulated category is, but it is basically rather hard to calculate them.
Thus, the notions of a level \cite{ABIM} and a relative Rouquier dimension \cite{AAITY} are introduced to measure how far one given object/subcategory from another given object/subcategory.
The main purpose of this paper is to report on these two invariants for the singularity category of a hypersurface of countable (Cohen--Macaulay) representation type.

Let $k$ be an uncountable algebraically closed field of characteristic not two, and let $R$ be a complete local hypersurface over $k$ with countable representation type.
Denote by $\ds(R)$ the singularity category of $R$, that is, the Verdier quotient of the bounded derived category of finitely generated $R$-modules by the perfect complexes.
Let $\dso(R)$ be the full subcategory of $\ds(R)$ consisting of objects that are zero on the punctured spectrum of $R$.
The main results of this paper are the following two theorems; we should mention that (1) and (2a) of Theorem \ref{b} are essentially shown in the previous papers \cite{AIT,T3} of some of the authors of the present paper.

\begin{thm}\label{a}
For all nonzero objects $M$ of $\ds(R)$ one has
$$
\lv_{\ds(R)}^M(k)\le1,
$$
that is, $k$ belongs to ${\langle M\rangle}_2^{\ds(R)}$.
\end{thm}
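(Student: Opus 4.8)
The plan is to reduce to the case that $M$ is indecomposable and $R$ is a one-dimensional hypersurface, and then, for each such $M$, to produce a short exact sequence of $R$-modules whose two outer terms lie in $\operatorname{add}(M\oplus\syz M)$, hence in $\langle M\rangle_1^{\ds(R)}$, and whose middle term has $k$ as a direct summand in $\ds(R)$. The exact triangle induced by such a sequence, after a rotation if needed, then exhibits $k$ in $\langle M\rangle_2^{\ds(R)}$.

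First, since $R$ is complete local, $\ds(R)\simeq\lcm(R)$ is Krull--Schmidt and $\langle M'\rangle_2^{\ds(R)}\subseteq\langle M\rangle_2^{\ds(R)}$ for every direct summand $M'$ of $M$, so we may assume $M$ is a nonzero indecomposable object of $\ds(R)$, i.e.\ an indecomposable non-free maximal Cohen--Macaulay $R$-module. Second, by the Buchweitz--Greuel--Schreyer classification of complete local hypersurfaces of countable representation type together with Kn\"orrer periodicity --- a triangle equivalence that carries the residue field to the residue field, hence preserves $\lv_{\ds(R)}^{(-)}(k)$ --- we may assume $R=k[[x,y]]/(x^2)$ (type $A_\infty$) or $R=k[[x,y]]/(x^2y)$ (type $D_\infty$); the finite representation type case (the $ADE$ hypersurfaces) is handled the same way, on its finite list of indecomposables.

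For $A_\infty$ the indecomposable non-free maximal Cohen--Macaulay modules are $R/(x)$ and $(x,y^n)$ with $n\ge1$, and in each case $\m M=yM$, where $y$ is a non-zero-divisor on $M$ because it avoids the unique minimal prime $(x)$ of $R$. Then $0\to M\xrightarrow{\,y\,}M\to M/\m M\to 0$ is exact with $M/\m M\cong k^{\mu(M)}$, and a rotation of the induced triangle places $k^{\mu(M)}$ in $\langle M\rangle_2^{\ds(R)}$; as this subcategory is closed under direct summands, $k\in\langle M\rangle_2^{\ds(R)}$. For $D_\infty$ one has $\syz k=\m=(x,y)$ and $k\cong\syz^{-1}\m$ in $\ds(R)$, so it suffices to put $\m$ in $\langle M\rangle_2^{\ds(R)}$. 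Here the relevant sequences are $0\to(g)\to\m\to\m/(g)\to0$ for $g=x$ and $g=y$; a direct computation gives $(x)\cong R/(xy)$, $\m/(x)\cong R/(x)$, $(y)\cong R/(x^2)$, $\m/(y)\cong R/(y)$, and since $\{R/(xy),R/(x)\}$ and $\{R/(x^2),R/(y)\}$ are $\syz$-orbits, for $M$ any one of these four ``exceptional'' modules the two outer terms of the appropriate sequence already lie in $\operatorname{add}(M\oplus\syz M)$. For $M$ in the remaining infinite families of $D_\infty$ --- those arising from the matrix factorizations $\bigl(\begin{smallmatrix}x&y^n\\0&xy\end{smallmatrix}\bigr)$ and $\bigl(\begin{smallmatrix}x^2&y^n\\0&y\end{smallmatrix}\bigr)$ of $x^2y$ and their syzygies --- one likewise produces a short exact sequence with outer terms in $\operatorname{add}(M\oplus\syz M)$ whose middle term has $k$ as a direct summand in $\ds(R)$; note the smallest member of the first family is the orbit $\{\m,\syz\m\}=\{\syz k,\syz^2k\}$, for which $\syz^2k\cong k$ already lies in $\langle\m\rangle_1^{\ds(R)}$.

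The delicate point is the $D_\infty$ computation. Unlike in the $A_\infty$ case, there is in general no non-zero-divisor $t$ with $\m M\subseteq tM$ --- already for the node $M=R/(xy)$ this fails, since $\m M$ needs two generators while $tM$ is cyclic --- so one is forced to use the non-principal ``$\m$-syzygy'' sequences above, and the real work is to verify, going down the Buchweitz--Greuel--Schreyer list, that for every indecomposable $M$ there is a short exact sequence whose two outer terms fall into a single $\syz$-orbit (equivalently, lie in $\langle M\rangle_1^{\ds(R)}$) and whose middle term has $k$ as a stable direct summand. Carrying this out uniformly over the infinite families is the least routine part of the argument.
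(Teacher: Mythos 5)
Your proposal handles the $A_\infty$ case $R=k[\![x,y]\!]/(x^2)$ correctly and rather elegantly: the observation that $\m M = yM$ for every indecomposable nonfree $M$, combined with the short exact sequence $0\to M\xrightarrow{y}M\to M/\m M\to 0$, immediately puts $k$ (as a summand of $k^{\mu(M)}$) in $\langle M\rangle_2$. This is a somewhat different, arguably more elementary, mechanism than the paper's, which instead uses the AR-triangles of Schreyer, $I_n\to I_{n-1}\oplus I_{n+1}\to I_n\rightsquigarrow$, and an ``additivity'' lemma for triangles (Lemma~\ref{ladder}) to derive $I_n\to I_1\oplus I_{2n-1}\to I_n\rightsquigarrow$ and thereby reach $\syz k=I_1$. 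Both are fine for $A_\infty$.

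The genuine gap is the $D_\infty$ case, and you flag it yourself: ``one likewise produces a short exact sequence\ldots'' and ``carrying this out uniformly over the infinite families is the least routine part of the argument.'' You have not actually produced the sequences for the infinite families $M_n^\pm$, $N_n^\pm$, and your own diagnosis (no non-zero-divisor $t$ with $\m M\subseteq tM$ in general) shows why the $A_\infty$ trick does not carry over. This is exactly where the paper does the real work: it takes the Schreyer AR-triangles
$$M_n^{\pm}\to N_{n+1}^{\pm}\oplus N_{n}^{\mp}\to M_n^{\mp}\rightsquigarrow,\qquad
N_n^{\pm}\to M_{n}^{\pm}\oplus M_{n-1}^{\mp}\to N_n^{\mp}\rightsquigarrow,$$
and telescopes them via Lemma~\ref{ladder} to get, for every $n$, a triangle of the shape $C\to N_1\oplus(\text{something})\to C\rightsquigarrow$ with $C=M_n$ or $N_n$, plus a separate triangle (from \cite[Prop.~2.1]{AIT}) handling the boundary module $X=R/(x)$. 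Without an analogue of that telescoping step (or some other uniform construction), your $D_\infty$ argument is not a proof.

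Two smaller points. First, your reduction to $d=1$ invokes Kn\"orrer periodicity, which adds \emph{two} variables; since the $A_\infty$ and $D_\infty$ normal forms increase the dimension one variable (one square) at a time, Kn\"orrer alone only connects dimensions of the same parity. The paper instead uses the double branched cover $R^\sharp$ and Proposition~\ref{lev}, which is the ``one-variable'' version (equivalence up to summands rather than an honest equivalence) and reduces any $d$ to $d=1$ in one shot. Second, your aside about the finite (ADE) representation type case is moot here: the hypotheses (uncountable algebraically closed $k$, $\operatorname{char}k\ne 2$, countable but infinite type) force $R$ to be $A_\infty$ or $D_\infty$.
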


\begin{thm}\label{b}
Let $\T$ be a nonzero thick subcategory of $\ds(R)$, and let $\X$ be a full subcategory of $\T$ closed under finite direct sums, direct summands and shifts.
Then the following statements hold.
\begin{enumerate}[\rm(1)]
\item
$\T$ coincides with either $\ds(R)$ or $\dso(R)$.
\item
\begin{enumerate}[\rm(a)]
\item
If $\T=\ds(R)$, then
$$
\dim_\X\T=
\begin{cases}
0 & (\text{if } \X =\T),\\
1 & (\text{if } \X \neq \T,\, \X \nsubseteq \dso(R)),\\
\infty & (\text{if }\X \subseteq \dso(R)).
\end{cases}
$$
\item
If $\T=\dso(R)$, then
$$
\dim_\X\T=
\begin{cases}
0 & (\text{if } \X  =\T),\\
1 & (\text{if } \X \neq \T,\,\#\ind \X=\infty),\\
\infty & (\text{if }\#\ind \X <\infty).
\end{cases}
$$
\end{enumerate}
\end{enumerate}
\end{thm}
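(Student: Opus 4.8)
The plan is to reduce everything to a short explicit list of rings and then read the table off by hand, citing \cite{AIT,T3} for what is already known. By the Buchweitz--Greuel--Schreyer classification of hypersurfaces of countable representation type together with Kn\"orrer periodicity (a triangle equivalence of singularity categories), I may assume $R$ belongs to a short list, the model case being the $A_\infty$-singularity $R=k[[x,y]]/(x^{2})$; the others are its even-dimensional Kn\"orrer partner and the two $D_\infty$-singularities. For $R=k[[x,y]]/(x^{2})$ the category $\ds(R)\simeq\lcm(R)$ is classical: up to isomorphism the nonzero indecomposables are $N=R/(x)$ and $M_{n}=\Cok\left(\begin{smallmatrix}x&y^{n}\\0&x\end{smallmatrix}\right)$ for $n\ge1$, every object is $\syz$-periodic of period one, and $\Sing R=\{\p,\m\}$ with $\p\subsetneq\m$. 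Since $(M_{n})_{\p}$ is free over $R_{\p}$ but $N_{\p}$ is not, $\dso(R)$ is the closure of $\{M_{n}\mid n\ge1\}$ under shifts, finite direct sums and direct summands, and $N\notin\dso(R)$. Two families of triangles will carry the argument: from $M_{i}\cong\operatorname{cone}(y^{i}\colon N\to N)$ in $\ds(R)$ one gets, after rotating, a triangle $N\to M_{i}\to N\to\syz^{-1}N$ for every $i\ge1$ (so $M_{i}$ is a middle term with both outer terms $\cong N$); and from $0\to M_{n}\xrightarrow{y^{i}}M_{n}\to M_{n}/y^{i}M_{n}\to0$ together with $M_{n}/y^{i}M_{n}\cong\bigl(R/(x,y^{i})\bigr)^{\oplus2}\cong M_{i}^{\oplus2}$ in $\ds(R)$ (valid for $1\le i<n$) one gets a triangle $M_{n}\to M_{i}^{\oplus2}\to M_{n}\to\syz^{-1}M_{n}$.

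For (1) I would cite \cite{T3}: thick subcategories of the singularity category of a hypersurface correspond bijectively to the specialisation-closed subsets of $\Sing R$, and $\{\p,\m\}$ with $\p\subsetneq\m$ admits exactly three of these, the proper nonzero one being $\dso(R)$. In (2) the entries equal to $0$ are formal: since $\X$ is closed under shifts, finite sums and summands, $\dim_{\X}\T=0$ if and only if $\T=\langle\X\rangle_{1}=\X$. The entry $\infty$ in the case $\T=\ds(R)$, $\X\subseteq\dso(R)$ is also formal, because $\dso(R)$ is thick and so $\langle\X\rangle_{n}\subseteq\dso(R)\subsetneq\ds(R)=\T$ for every $n$.

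The remaining $\infty$-entry, $\T=\dso(R)$ with $\#\ind\X<\infty$, is where the real difficulty lies. I would prove that $\langle\X\rangle_{n}$ has only finitely many indecomposables for every $n$; since $\dso(R)$ has infinitely many, no $\langle\X\rangle_{n}$ then equals $\dso(R)$, so $\dim_{\X}\dso(R)=\infty$. By induction on $n$ this comes down to a finiteness lemma: if $\X$ lies in the additive closure of $M_{1},\dots,M_{j_{0}}$, then every indecomposable summand of a middle term of a triangle whose two outer terms lie in $\X$ is again some $M_{c}$ with $c$ bounded in terms of $j_{0}$. Establishing this requires the explicit classification of the (finite-dimensional) stable $\Hom$-spaces $\underline{\Hom}_{R}(M_{a},M_{b})$ and of the mapping cones of their elements. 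I expect this to be the main obstacle: a purely additive invariant cannot work, because $\dim_{k}\underline{\Hom}_{R}(N,M_{j})=j$ is unbounded yet additive on direct sums, so the bound must be extracted from an \emph{indecomposable} summand of the middle term rather than from the whole middle term --- the content of the lemma being that this passage cannot blow up ``size'', a structural feature of $\ds(R)$ special to the countable-type situation (it would fail for uncountable type).

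It remains to treat the entries equal to $1$; in each of these $\X\ne\T$, so $\dim_{\X}\T\ge1$ automatically and it suffices to show $\langle\X\rangle_{2}=\T$. If $\T=\ds(R)$ and $\X\nsubseteq\dso(R)$, then $N$ is the only indecomposable outside $\dso(R)$, so closure under summands and shifts gives $N\in\X$ and $\langle\X\rangle_{2}\supseteq\langle N\rangle_{2}$; since $\underline{\End}_{R}(N)=k[[y]]$ is a complete discrete valuation ring, every morphism $N^{\oplus a}\to\syz^{-1}(N^{\oplus c})\cong N^{\oplus c}$ admits a Smith normal form, so passing to mapping cones while using the first family of triangles yields $\langle N\rangle_{2}=\operatorname{add}\bigl(N\oplus\bigoplus_{j\ge1}M_{j}\bigr)=\ds(R)$, whence $\dim_{\X}\ds(R)=1$. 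If $\T=\dso(R)$ and $\#\ind\X=\infty$, then the $M_{j}$ are the only indecomposables of $\dso(R)$, so $\X$ contains $M_{n}$ for arbitrarily large $n$; given $j$, choose $M_{n}\in\X$ with $n>j$ and use the second family of triangles to put $M_{j}^{\oplus2}$, hence $M_{j}$, into $\langle M_{n}\rangle_{2}\subseteq\langle\X\rangle_{2}$, so that $\langle\X\rangle_{2}\supseteq\operatorname{add}\{M_{j}\mid j\ge1\}=\dso(R)$ and $\dim_{\X}\dso(R)=1$. The $D_\infty$ rings go through by the identical scheme with their (similarly explicit, longer) lists of indecomposables and triangles; since (1) and (2a) are essentially proved in \cite{AIT,T3}, in practice I would cite those and only supply the finiteness lemma and the $\dso(R)$-computations above. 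The finiteness lemma is the bottleneck throughout.
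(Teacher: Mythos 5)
Your overall plan (reduce to a short explicit list via Kn\"orrer periodicity, read off (1) from the classification of thick subcategories in \cite{T3}, get the $1$-entries from explicit triangles, note the $0$-entries and the $\infty$-entry of (2a) are formal) matches the paper's proof in spirit; in particular, for the $1$-entry of (2b) your two families of triangles play the same role as the triangles the paper extracts from Schreyer's exact sequences via its Lemma 4.1.

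The genuine gap is exactly where you flag it: the $\infty$-entry of (2b). Your proposed ``finiteness lemma'' --- that $\langle\X\rangle_n$ can accumulate only boundedly large indecomposables when $\#\ind\X<\infty$ --- is the whole content of that case, and you do not prove it; you instead forecast that it requires computing all stable Hom-spaces and mapping cones between the $M_j$'s and admit that a naive additive invariant (such as $\dim_k\underline{\Hom}$) cannot be used because it is unbounded. The paper avoids this entirely by a short indirect argument you did not consider: if $\#\ind\X<\infty$ and $\dim_\X\cmo(R)=m<\infty$, then $X=X_1\oplus\cdots\oplus X_n$ (the sum of the finitely many indecomposables of $\X$) strongly generates $\cmo(R)$ in $m+1$ steps, so $\cmo(R)$ has finite Rouquier dimension. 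By the result \cite[Theorem 1.1(2)]{DT}, finite Rouquier dimension of $\cmo(R)$ would force $R$ to have at most an isolated singularity; but in both the $(\A_\infty)$ and $(\D_\infty)$ cases the nonmaximal prime $(x_0,x_2,\dots,x_d)R$ lies in $\Sing R$, a contradiction. So the bottleneck you identify is circumvented by a one-paragraph reduction to known global information about $R$, not by a local structural analysis of triangles. As it stands your proposal leaves that entry unproved.

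One smaller remark: you write that $\dim_\X\T=0$ iff $\T=\X$, which uses that $\X$ is closed under isomorphisms as well as shifts, sums, and summands; the hypothesis in the theorem doesn't list isomorphism-closure, but it is harmless since one always replaces $\X$ by its isomorphism closure, which changes no $\langle\X\rangle_n$. The paper treats this the same way.
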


As a consequence of Theorem \ref{b}, we get the Orlov spectra and Rouquier dimensions of $\ds(R)$ and $\dso(R)$.
Note that the equalities of Rouquier dimensions are already known to hold \cite{AIT,DT}.

\begin{cor}\label{c}
It holds that
$$
\os\ds(R)=\{1\},\qquad
\os\dso(R)=\emptyset.
$$
In particular, $\dim \ds(R)=1$ and $\dim\dso(R)=\infty$.
\end{cor}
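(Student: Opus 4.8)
The plan is to read Corollary \ref{c} off Theorem \ref{b} by specializing to principal subcategories. Recall that the Orlov spectrum $\os\T$ of a triangulated category $\T$ is by definition the set of those $\dim_{\langle M\rangle}\T$ that are finite, $M$ running over the objects of $\T$, where $\langle M\rangle$ denotes the smallest full subcategory of $\T$ closed under finite direct sums, direct summands and shifts and containing $M$; and that the Rouquier dimension satisfies $\dim\T=\inf\os\T$ with the convention $\inf\emptyset=\infty$. So I would fix an object $M$, put $\X=\langle M\rangle$ in Theorem \ref{b}, and determine which of the three cases of part (2) can actually occur for such a principal $\X$.

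Before doing that I would record two structural inputs. First, since $R$ is a hypersurface, $\ds(R)$ is (equivalent to) the stable category of maximal Cohen--Macaulay $R$-modules, on which the shift functor is the inverse syzygy and satisfies $[2]\cong\mathrm{id}$; hence, by Krull--Schmidt, if $M$ is a direct sum of $r$ indecomposables then $\langle M\rangle$ has at most $2r$ indecomposable objects, so $\langle M\rangle$ always has only finitely many indecomposable objects. Second, because $R$ has countable --- hence infinite, and in particular non-isolated --- Cohen--Macaulay representation type, $\ds(R)$ has infinitely many indecomposable objects, $\dso(R)$ is a proper nonzero thick subcategory of $\ds(R)$, and $\dso(R)$ itself has infinitely many indecomposable objects; for this I would cite \cite{AIT,T3} and the classification of hypersurfaces of countable representation type, or check it directly on the explicit list of indecomposable Cohen--Macaulay modules.

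With these in hand the computation is short. For $\T=\ds(R)$, Theorem \ref{b}(2a) applied to $\X=\langle M\rangle$ shows $\dim_{\langle M\rangle}\ds(R)\in\{0,1,\infty\}$: the value $0$ is impossible since $\langle M\rangle\neq\ds(R)$ by the first input; the value $1$ occurs precisely when $M\notin\dso(R)$ (equivalently $\langle M\rangle\nsubseteq\dso(R)$, since $\dso(R)$ is thick), and such $M$ exist because $\dso(R)\subsetneq\ds(R)$; and every $M\in\dso(R)$ gives $\infty$. Hence $\os\ds(R)=\{1\}$ and $\dim\ds(R)=1$. For $\T=\dso(R)$, I would apply Theorem \ref{b}(2b) to $\X=\langle M\rangle$ with $M\in\dso(R)$: the value $0$ would force $\dso(R)$ to have finitely many indecomposable objects, and the value $1$ would force $\#\ind\langle M\rangle=\infty$, both of which are excluded by the two inputs. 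So every object of $\dso(R)$ has infinite generation time, whence $\os\dso(R)=\emptyset$ and $\dim\dso(R)=\inf\emptyset=\infty$.

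The genuinely substantive points are the two recorded inputs: the relation $[2]\cong\mathrm{id}$ for the hypersurface singularity category (which, together with the infinitude of $\ind\ds(R)$ and $\ind\dso(R)$, is what rules out the ``$\X=\T$'' branch for a principal $\X$), and the existence of an object outside $\dso(R)$ (which is what makes the value $1$ genuinely occur for $\ds(R)$). Everything else is bookkeeping with the definition of the Orlov spectrum. I expect the only possible obstacle to be supplying a clean reference or verification for the claim that $\dso(R)$ is a proper nonzero thick subcategory with infinitely many indecomposable objects, since this rests on the concrete representation theory of the countable-type hypersurfaces rather than on Theorem \ref{b}.
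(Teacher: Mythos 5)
Your proposal is correct and follows the same overall strategy as the paper: translate to the Cohen--Macaulay picture via Buchweitz's theorem and specialize Theorem~\ref{b} (equivalently Theorem~\ref{A}) to the principal subcategory $\X=\langle M\rangle$. The one genuine divergence is in how you rule out the value $0$ for $\T=\ds(R)$. The paper shows $\langle M\rangle\neq\lcm(R)$ by observing this is equivalent to $\dim\lcm(R)>0$ and citing \cite[Propositions~2.4 and~2.5]{radius}; you instead argue directly that $\langle M\rangle$ has only finitely many indecomposable objects (by 2-periodicity $[2]\cong\mathrm{id}$ plus Krull--Schmidt) while $\lcm(R)$ has infinitely many, since $R$ has countable (hence infinite) representation type. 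Your route is more elementary and is also more uniform: the paper already uses exactly that finiteness observation --- ``it is observed that $\#\ind\X<\infty$'' --- in the $\dso(R)$ half of the argument, so you are simply applying it consistently in both halves rather than invoking a result from \cite{radius} in one and the counting observation in the other. The structural facts you list (properness of $\dso(R)$ from non-isolatedness of the singularity, infinitude of $\ind\dso(R)$ from the explicit $\A_\infty$/$\D_\infty$ classification) are exactly what the paper uses as well, so there is no gap.
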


The organization of this paper is as follows.
In Section 2, we give the definitions of the Orlov spectrum and the (relative) Rouquier dimension of a triangulated category.
We also recall the theorem of Buchweitz providing a triangle equivalence for a Gorenstein ring $R$ between the singularity category of $R$ and the stable category of maximal Cohen--Macaulay $R$-modules.
In Section 3, we investigate the Orlov spectra and Rouquier dimensions of the singularity categories of hypersurfaces and their double branched covers to reduce to the case of Krull dimension one.
In Section 4, using the results obtained in the previous sections together with the classification theorem of maximal Cohen--Macaulay modules over a hypersurface of countable representation type, we prove our main theorems stated above.

\begin{conv}
Throughout this paper, all subcategories are assumed to be full.
We often omit subscripts and superscripts if there is no risk of confusion.
\end{conv}
\section{Preliminaries}

We recall the definitions of several basic notions which are used in the later sections.

\begin{nota}
Let $\T$ be a triangulated category.
\begin{enumerate}[(1)]
\item
For a subcategory $\X$ of $\T$ we denote by $\langle \X \rangle$ the smallest subcategory of $\T$ containing $\X$ which is closed under isomorphisms, shifts, finite direct sums and direct summands.
\item
For subcategories $\X, \Y$ of $\T$ we denote by $\X * \Y$ the subcategory consisting of objects $M \in \T$ such that there is an exact triangle $X\to M\to Y \to X[1]$ with $X\in \X$ and $Y\in \Y$.
Set $\X \diamond \Y := \langle\langle\X\rangle * \langle\Y\rangle\rangle$.
\item
For a subcategory $\X$ of $\T$ we put ${\langle \X\rangle}_0:=0$, ${\langle \X\rangle}_1:=\langle \X\rangle$, and inductively define ${\langle \X\rangle}_n:= \X \diamond {\langle \X\rangle}_{n-1}$ for $n\geq 2$.
We set ${\langle M \rangle}_n := {\langle \{ M \}\rangle}_n$ for an object $M\in \T$.
\end{enumerate}
\end{nota}

\begin{dfn}
Let $\T$ be a triangulated category.
\begin{enumerate}[(1)]
\item
The {\em generation time} of an object $M\in\T$ is defined by
$$
\gt_{\T}(M):=\inf\{ n\geq 0\mid\T={\langle M \rangle}_{n+1} \}.
$$
If $\gt_{\T}(M)$ is finite, $M$ is called a {\it strong generator} of $\T$.
\item
The {\it Orlov spectrum} and {\em (Rouquier) dimension} of $\T$ are defined as follows.
\begin{align*}
\os(\T)&:=\{ \gt_{\T}(M)\mid M\text{ is a strong generator of }\T\},\\
\dim\T&:=\inf\os(\T)=\inf\{n\ge0\mid\T={\langle M\rangle}_{n+1}\text{ for some }M\in\T\}.
\end{align*}
\item
Let $\X$ be a subcategory of $\T$.
The {\em dimension} of $\T$ {\em with respect to} $\X$ is defined by
$$
\dim_\X\T:=\inf\{n\ge0\mid \T={\langle\X\rangle}_{n+1}\}.
$$
\item
Let $M, N$ be objects of $\T$.
Then the {\em level} of $N$ with respect to $M$ is defined by
$$
\lv_{\T}^M(N):=\inf\{ n\geq 0\mid N \in {\langle M \rangle}_{n+1} \}.
$$
\end{enumerate}
\end{dfn}

\begin{dfn}
Let $R$ be a Noetherian ring.
\begin{enumerate}[(1)]
\item
We denote by $\db(R)$ the bounded derived category of finitely generated $R$-modules.
\item
A {\em perfect} complex is by definition a bounded complex of finitely generated projective modules.
\item
We denote by $\perf(R)$ the subcategory of $\db(R)$ consisting of complexes quasi-isomorphic to perfect complexes.
\item
The {\em singularity category} of $R$ is defined by
$$
\ds(R):=\db(R)/\perf(R),
$$
that is, the Verdier quotient of $\db(R)$ by $\perf(R)$.
\end{enumerate}
\end{dfn}

Note that every object of the singularity category $\ds(R)$ is isomorphic to a shift of some $R$-module; see \cite[Lemma 2.4]{sing}.

Let $R$ be a Cohen--Macaulay local ring.
Let $\cm(R)$ be the category of maximal Cohen-Macaulay $R$-modules, and $\lcm(R)$ the stable category of $\cm(R)$.
The following theorem is celebrated and fundamental; see \cite[Theorem 4.4.1]{B}.

\begin{thm}[Buchweitz]\label{ragnar}
Let $R$ be a Gorenstein local ring of Krull dimension $d$.
Then $\lcm(R)$ has the structure of a triangulated category with shift functor $\Omega^{-1}$, and there exist mutually inverse triangle equivalence functors
$$
F:\ds(R)\rightleftarrows\lcm(R):G,
$$
such that $GM=M$ for each maximal Cohen--Macaulay $R$-module $M$ and $FN=\syz^dN[d]$ for each finitely generated $R$-module $N$.
\end{thm}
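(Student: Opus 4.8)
The plan is to follow the classical route of Buchweitz: realize $\cm(R)$ as a Frobenius category, invoke Happel's theorem to obtain a triangulated structure on its stable category $\lcm(R)$, and then identify $\lcm(R)$ with $\ds(R)$ through the evident comparison functor.

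First I would put on $\cm(R)$ the exact structure whose conflations are the short exact sequences of maximal Cohen--Macaulay modules, and check that its projective-injective objects are exactly the free modules. That free modules are projective objects is immediate; that they are injective objects is where the Gorenstein hypothesis enters, through the vanishing $\operatorname{Ext}^i_R(M,R)=0$ for all $i>0$ and all maximal Cohen--Macaulay $M$, which follows from $R$ having finite injective dimension $d$. For ``enough projectives'' one takes a free cover $0\to\syz M\to F\to M\to 0$ and observes that $\syz M$ is again maximal Cohen--Macaulay by the depth lemma. For ``enough injectives'' one transports this along the exact duality $(-)^{\vee}=\Hom_R(-,R)$ of $\cm(R)$: dualizing a free cover of $M^{\vee}$ produces a conflation $0\to M\to F^{\vee}\to\syz^{-1}M\to 0$ with $\syz^{-1}M$ maximal Cohen--Macaulay, using again $\operatorname{Ext}^1_R(M^{\vee},R)=0$ and reflexivity of $M$; this also fixes the cosyzygy functor $\syz^{-1}$.

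By Happel's theorem the stable category $\lcm(R)$ of this Frobenius category is then triangulated, with suspension functor $\syz^{-1}$ and with distinguished triangles those isomorphic to images $A\to B\to C\to\syz^{-1}A$ of conflations $0\to A\to B\to C\to 0$; this gives the first assertion. Next I would define $G\colon\lcm(R)\to\ds(R)$ to be the functor induced by regarding a maximal Cohen--Macaulay module as a complex concentrated in degree $0$; it factors through the stable category because free modules are zero objects in $\ds(R)$. One checks $G$ is a triangle functor: a conflation in $\cm(R)$ is in particular a short exact sequence of modules, hence an exact triangle in $\db(R)$ and so in $\ds(R)$, while the defining conflation $0\to A\to F\to\syz^{-1}A\to 0$ with $F$ perfect forces $\syz^{-1}A\cong A[1]$ in $\ds(R)$, so $G$ is compatible with suspensions.

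It then remains to show $G$ is dense and fully faithful. Density is the easy half: by the cited \cite[Lemma 2.4]{sing} every object of $\ds(R)$ is isomorphic to a shift of some module $N$, and truncating a projective resolution of $N$ at step $d$ gives $N\cong\syz^dN[d]$ in $\ds(R)$ (the truncated perfect part being zero there), with $\syz^dN$ maximal Cohen--Macaulay by the depth lemma; hence every object lies in the essential image of $G$, and unwinding this computation yields the stated formula $FN=\syz^dN[d]$ for the quasi-inverse $F$ of $G$. The main obstacle is full faithfulness, i.e.\ bijectivity of the natural map $\underline{\Hom}_R(M,N)\to\Hom_{\ds(R)}(M,N)$ for maximal Cohen--Macaulay $M,N$. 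For this I would use that over a Gorenstein ring each maximal Cohen--Macaulay module admits a complete (Tate) resolution --- a totally acyclic complex of free modules coinciding with a projective resolution in high homological degree --- so that the Hom-sets in $\ds(R)$ are computed by Tate cohomology $\widehat{\operatorname{Ext}}^0_R(M,N)$; a computation with fractions in the Verdier quotient, using that the objects of $\perf(R)$ are precisely those admitting bounded, eventually split, free resolutions, then identifies this group with $\underline{\Hom}_R(M,N)$. This last step is the heart of \cite[Theorem 4.4.1]{B}, to which we defer for the full details.
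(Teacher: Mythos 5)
Your sketch correctly reproduces the standard Buchweitz argument: make $\cm(R)$ into a Frobenius category with free modules as the projective-injectives (Gorensteinness supplying injectivity via $\operatorname{Ext}^{>0}_R(M,R)=0$, and the duality $\Hom_R(-,R)$ supplying enough injectives and the cosyzygy functor), apply Happel to triangulate $\lcm(R)$ with suspension $\syz^{-1}$, and then show the natural functor to $\ds(R)$ is a triangle equivalence via brutal truncation of a free resolution (density, which also yields the formula $FN=\syz^dN[d]$) and a Tate-cohomology computation (full faithfulness). The paper itself states this result without proof, citing \cite[Theorem 4.4.1]{B}, and your outline follows that source's route, correctly isolating full faithfulness as the one step requiring the complete-resolution/Tate-cohomology machinery and appropriately deferring its details.
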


By virtue of Theorem \ref{ragnar}, for a Gorenstein local ring, the study of generation in the singularity category reduces to the stable category of maximal Cohen--Macaulay modules.

\section{The relationship between the singularity categories of $R$ and $\s$}

Let $(R,\m,k)$ be a complete equicharacteristic local hypersurface of (Krull) dimension $d$.
Then thanks to Cohen's structure theorem we can identify $R$ with a quotient of a formal power series ring over $k$:
$$
R=k[\![x_0,x_1,\dots,x_d]\!]/(f)
$$
with $0\neq f \in (x_0,x_1,\dots,x_d)^2$. 
We define a hypersurface of dimension $d+1$:
$$
\s=k[\![x_0,x_1,\ldots,x_d, y]\!]/(f+y^2).
$$
Note that the element $y$ is $\s$-regular and there is an isomorphism $\s/y\s\cong R$.
The main purpose of this section is to compare generation in the singularity categories $\ds(R)$ and $\ds(\s)$.
As both $R$ and $\s$ are Gorenstein, in view of Theorem \ref{ragnar} and the remark following the theorem, it suffices to investigate the stable categories of maximal Cohen--Macaulay modules $\lcm(R)$ and $\lcm(\s)$.

The following result is a consequence of \cite[Proposition 12.4]{Y}, which plays a key role to compare generation in $\lcm(R)$ and $\lcm(\s)$.

\begin{lem}\label{dense}
The assignments $M\mapsto\syz_{\s}M$ and $N\mapsto N/yN$ define triangle functors $\Phi :\lcm(R) \to \lcm(\s)$ and $\Psi :\lcm(\s) \to \lcm(R)$ satisfying
$$
\Psi\Phi(M)\cong M\oplus M[1],\qquad
\Phi\Psi(N) \cong N\oplus N[1].
$$
In particular, $\Phi$ and $\Psi$ are both equivalences up to direct summands.
\end{lem}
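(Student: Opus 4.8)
The plan is to deduce the statement from \cite[Proposition 12.4]{Y}, which deals precisely with the double branched cover $\s=S[\![y]\!]/(f+y^2)$ of $R=S/(f)$ for $S=k[\![x_0,\dots,x_d]\!]$, after two translations: from Yoshino's notation to ours, and from the syzygy functors to the suspensions $[1]$ of $\lcm(R)$ and $\lcm(\s)$ (over a hypersurface one has $\syz^{-1}\cong\syz$, so $[1]\cong\syz$).

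First I would check that $\Phi$ and $\Psi$ are well-defined additive functors between the stable categories. Regarding an $R$-module as an $\s$-module via $\s\twoheadrightarrow\s/y\s\cong R$, the assignment $\syz_\s(-)$ descends to stable categories and sends a free $R$-module to a free $\s$-module, since $\syz_\s R=\syz_\s(\s/y\s)=y\s\cong\s$ as $y$ is $\s$-regular; moreover, if $M\in\cm(R)$ then $\operatorname{depth}_\s M=\operatorname{depth}_R M=d=\dim\s-1$, so the depth lemma applied to $0\to\syz_\s M\to\s^{\oplus n}\to M\to0$ yields $\syz_\s M\in\cm(\s)$. Hence $\Phi\colon\lcm(R)\to\lcm(\s)$ is well-defined. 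For $\Psi$, every $N\in\cm(\s)$ has $\operatorname{Ass}_\s N\subseteq\operatorname{Min}\s$, because $\s$ is Cohen--Macaulay, hence equidimensional without embedded primes, and every associated prime of a maximal Cohen--Macaulay module gives a top-dimensional quotient; as $y$ lies outside every minimal prime of $\s$ (it is $\s$-regular), $y$ is $N$-regular and $N/yN\in\cm(R)$. Since $-/y-$ is functorial, right exact and preserves freeness, $\Psi\colon\lcm(\s)\to\lcm(R)$ is well-defined.

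Next I would verify that $\Phi$ and $\Psi$ are triangle functors. As $y$ is $N$-regular for every $N\in\cm(\s)$, the functor $-/y-$ is exact on $\cm(\s)$ (indeed $\Tor_1^\s(-,\s/y\s)$ computes the $y$-torsion, which vanishes there), and applying it to $0\to\syz_\s N\to\s^{\oplus n}\to N\to0$ shows that $\Psi$ commutes with the syzygy functor up to free summands; together with the exactness just noted, this gives that $\Psi$ sends every short exact sequence in $\cm(\s)$ to one in $\cm(R)$, hence is a triangle functor, since every triangle of $\lcm(\s)$ is induced by a short exact sequence in $\cm(\s)$. For $\Phi$, the horseshoe lemma turns a short exact sequence $0\to A\to B\to C\to0$ in $\cm(R)$ into one of the form $0\to\syz_\s A\to\syz_\s B\oplus(\text{free})\to\syz_\s C\to0$ in $\cm(\s)$, and applied to a cosyzygy sequence it also shows that $\Phi$ commutes with the suspension up to free summands; hence $\Phi$ too is a triangle functor.

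Finally I would establish the two isomorphisms via matrix factorizations. If $M\cong\Cok_R\phi$ for a matrix factorization $(\phi,\psi)$ of $f$ over $S$ (so $\Cok_R\psi\cong\syz_R M$), then chasing $0\to\syz_\s M\to\s^{\oplus n}\to M\to0$ identifies $\syz_\s M$ with the cokernel over $\s$ of the matrix factorization $\left(\begin{smallmatrix}\psi&yI\\yI&-\phi\end{smallmatrix}\right)$ of $f+y^2$; reducing modulo $y$ kills the off-diagonal blocks and gives $\Psi\Phi(M)\cong\Cok_R\psi\oplus\Cok_R\phi\cong\syz_R M\oplus M\cong M[1]\oplus M$. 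Dually, for $N\cong\Cok_\s\alpha$ with $(\alpha,\beta)$ a matrix factorization of $f+y^2$, setting $y=0$ produces a matrix factorization $(\alpha_0,\beta_0)$ of $f$ representing $\Psi N$, so that $\Phi\Psi(N)$ is represented by $\left(\begin{smallmatrix}\beta_0&yI\\yI&-\alpha_0\end{smallmatrix}\right)$. The step I expect to be the main obstacle is to prove that this matrix factorization is equivalent, up to trivial summands, to $\left(\begin{smallmatrix}\alpha&0\\0&\beta\end{smallmatrix}\right)$ — equivalently, that $\Phi\Psi(N)\cong N\oplus\syz_\s N\cong N\oplus N[1]$ — which requires using the $y$-adic expansions of $\alpha$ and $\beta$ together with the identity $\alpha\beta=(f+y^2)I$; this is exactly what \cite[Proposition 12.4]{Y} supplies, and I would invoke it here rather than reproduce the computation. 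With both isomorphisms in hand the final claim is immediate: $N$ is a direct summand of $\Phi\Psi(N)$ and $M$ of $\Psi\Phi(M)$, so every object of $\lcm(\s)$ is a direct summand of one in the image of $\Phi$, and symmetrically for $\Psi$ — which is the sense in which $\Phi$ and $\Psi$ are equivalences up to direct summands.
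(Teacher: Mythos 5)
The paper gives no proof for this lemma beyond the one-line citation of \cite[Proposition 12.4]{Y}, and your argument ultimately rests on the same citation for the crucial matrix-factorization equivalence; the surrounding verifications (well-definedness via depth and associated primes, triangle-functoriality via exactness of $-/y-$ and the horseshoe lemma, the $\left(\begin{smallmatrix}\psi&yI\\yI&-\phi\end{smallmatrix}\right)$ computation) are correct and fill in precisely the details that Yoshino's result packages. So your proposal is correct and takes essentially the same route as the paper, just spelled out more explicitly.
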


Applying this lemma, we deduce relationships of levels in $\lcm(R)$ and $\lcm(\s)$.

\begin{prop}\label{lev}
One has the following equalities.
\begin{enumerate}[\rm(1)]
\item
$\lv_{\lcm(R)}^M(\syz_R^dk)=\lv_{\lcm(\s)}^{\syz_{\s} M}(\syz_{\s}^{d+1}k)$ for each $M\in\lcm(R)$.
\item
$\lv_{\lcm(\s)}^N(\syz_{\s}^{d+1}k)=\lv_{\lcm(R)}^{N/yN}(\syz_{R}^{d}k)$ for each $N\in\lcm(\s)$.
\end{enumerate}
\end{prop}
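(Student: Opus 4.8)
The plan is to exploit the functors $\Phi$ and $\Psi$ from Lemma \ref{dense} together with the two basic formal properties of levels: that a triangle functor cannot increase levels, i.e.\ $\lv_{\mathcal{D}}^{FX}(FY)\le\lv_{\mathcal{C}}^X(Y)$ for any triangle functor $F\colon\mathcal{C}\to\mathcal{D}$, and that levels are insensitive to adding direct summands and shifts, so in particular $\lv^{M\oplus M[1]}(N)=\lv^M(N)$ whenever $N\in\langle M\rangle_{\bullet}$ with the same thick closure (more precisely, $\langle M\oplus M[1]\rangle_n=\langle M\rangle_n$ for all $n$). First I would record these two observations as a short preliminary remark, since both follow immediately from the definitions: a triangle functor sends an exact triangle realizing $Y\in\langle X\rangle_{n+1}$ to one realizing $FY\in\langle FX\rangle_{n+1}$, and $\langle X\rangle$ already contains all shifts and summands of $X$.

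For part (1), fix $M\in\lcm(R)$ and set $c=\lv_{\lcm(R)}^M(\syz_R^dk)$. Applying $\Phi$ and noting $\Phi(M)=\syz_{\s}M$ while $\Phi(\syz_R^d k)=\syz_{\s}(\syz_R^d k)\cong\syz_{\s}^{d+1}k$ in $\lcm(\s)$ (up to free summands, which vanish in the stable category), we get $\lv_{\lcm(\s)}^{\syz_{\s}M}(\syz_{\s}^{d+1}k)\le c$. Conversely, apply $\Psi$ to the triangles realizing $\syz_{\s}^{d+1}k\in\langle\syz_{\s}M\rangle_{c'+1}$ where $c'$ denotes the right-hand level; using $\Psi\Phi(M)\cong M\oplus M[1]$ and $\Psi(\syz_{\s}^{d+1}k)=\Psi\Phi(\syz_R^d k)\cong\syz_R^d k\oplus\syz_R^{d+1}k$ — and here I would invoke that $\syz_R^{d+1}k\in\langle\syz_R^d k\rangle_1$ so the summand does not change the level — we obtain $\lv_{\lcm(R)}^{M\oplus M[1]}(\syz_R^d k)\le c'$, hence $c\le c'$ by the summand-invariance of levels. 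The two inequalities give the equality. Part (2) is entirely symmetric: apply $\Psi$ to get one inequality, using $\Psi(N)=N/yN$ and $\Psi(\syz_{\s}^{d+1}k)\cong\syz_R^d k$ up to the harmless shifted summand; then apply $\Phi$ and use $\Phi\Psi(N)\cong N\oplus N[1]$ together with $\Phi(\syz_R^d k)\cong\syz_{\s}^{d+1}k$ for the reverse inequality.

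The main obstacle is bookkeeping rather than conceptual: one must be careful that the identifications $\Phi(\syz_R^d k)\cong\syz_{\s}^{d+1}k$ and $\Psi(\syz_{\s}^{d+1}k)\cong\syz_R^d k$ (or $\syz_R^d k\oplus\syz_R^{d+1}k$) hold in the respective stable categories, which requires commuting syzygies past the functors $M\mapsto\syz_{\s}M$ and $N\mapsto N/yN$ and discarding free modules; the relation $\s/y\s\cong R$ and $y$ being $\s$-regular is what makes $\syz_{\s}^{d+1}k$ map under $N\mapsto N/yN$ to $\syz_R^d k$ up to free summands, and this can be checked directly from a minimal free resolution of $k$ over $\s$ reduced modulo $y$. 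The only other point needing a line of justification is that passing from $M$ to $M\oplus M[1]$ leaves the level unchanged, which is immediate once one unwinds that $\langle M\oplus M[1]\rangle_n=\langle M\rangle_n$ for every $n$.
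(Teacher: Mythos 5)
Your overall strategy --- apply $\Phi$ and $\Psi$ in a chain to squeeze two inequalities into an equality, using that triangle functors cannot increase level and that level is unchanged when the generating object is replaced by a direct sum with its shift --- is exactly the paper's approach, so the conceptual skeleton is right. However, there is a genuine gap in the module-theoretic identifications that carry the argument.

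You assert $\Phi(\syz_R^d k)=\syz_{\s}(\syz_R^d k)\cong\syz_{\s}^{d+1}k$ ``up to free summands,'' and you claim the companion fact for $\Psi$ ``can be checked directly from a minimal free resolution of $k$ over $\s$ reduced modulo $y$.'' Neither is correct as stated. Reducing the minimal $\s$-free resolution of $k$ modulo $y$ does not give an $R$-free resolution of $k$ (since $\Tor^{\s}_1(k,R)\ne0$), and in fact the relevant reduction carries a genuine extra nonfree summand: the correct statement, which the paper pulls from \cite[Corollary 5.3]{T0}, is $\Psi(\syz_{\s}^{d+1}k)=\syz_{\s}^{d+1}k/y\syz_{\s}^{d+1}k\cong\syz_R^{d+1}k\oplus\syz_R^{d}k\cong\syz_R^d k[1]\oplus\syz_R^d k$. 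Correspondingly, what one can deduce about $\Phi$ is only the weaker isomorphism $\Phi(\syz_R^d k)\oplus\Phi(\syz_R^d k)[1]\cong\syz_{\s}^{d+1}k\oplus\syz_{\s}^{d+1}k[1]$, not $\Phi(\syz_R^d k)\cong\syz_{\s}^{d+1}k$; by Krull--Schmidt the former does not determine $\Phi(\syz_R^d k)$ up to isomorphism, and the strong form you use is unjustified. Fortunately this is not fatal: the weaker isomorphisms already give $\lv^{\Phi M}(\Phi(\syz_R^d k))=\lv^{\Phi M}(\syz_{\s}^{d+1}k)$ and $\lv^{\Psi N}(\Psi(\syz_{\s}^{d+1}k))=\lv^{\Psi N}(\syz_R^d k)$ because level ignores added shifted copies, which is what the paper's proof exploits. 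So the fix is to replace your strong, unproven isomorphism with the precise one from \cite[Corollary 5.3]{T0} and then run your inequality chain; without that citation (or an equivalent nontrivial computation of how syzygies of $k$ behave under the change of rings $\s\to R$), the step you call ``bookkeeping'' is where the proof is actually incomplete.
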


\begin{proof}
We use Lemma \ref{dense} and adopt its notation.
There are (in)equalities
\begin{align*}
\lv_{\lcm(R)}^M(\syz_R^dk)
\ge\lv_{\lcm(\s)}^{\Phi M}(\Phi(\syz_R^dk))
&\ge\lv_{\lcm(R)}^{\Psi\Phi M}(\Psi\Phi(\syz_R^dk))\\
&=\lv_{\lcm(R)}^{M\oplus M[1]}(\syz_R^dk\oplus\syz_R^dk[1])
=\lv_{\lcm(R)}^M(\syz_R^dk),
\end{align*}
which show $\lv_{\lcm(R)}^M(\syz_R^dk)=\lv_{\lcm(\s)}^{\Phi M}(\Phi(\syz_R^dk))$.
A similar argument gives rise to $\lv_{\lcm(\s)}^N(\syz_{\s}^{d+1}k)=\lv_{\lcm(R)}^{\Psi N}(\Psi(\syz_\s^{d+1}k))$.
There are isomorphisms in $\lcm(R)$:
\begin{align}\label{1e}
\Psi(\syz_\s^{d+1}k)
=\syz_\s^{d+1}k/y\syz_\s^{d+1}k
&\cong\syz_{\s/y\s}^{d+1}k\oplus\syz_{\s/y\s}^dk\\
\notag&\cong\syz_R^{d+1}k\oplus\syz_R^dk
\cong\syz_R^dk[1]\oplus\syz_R^dk,
\end{align}
where the first isomorphism follows from \cite[Corollary 5.3]{T0}.
Applying $\Phi$, we obtain
\begin{equation}\label{2e}
\syz_\s^{d+1}k\oplus\syz_\s^{d+1}k[1]\cong\Phi(\syz_R^dk)[1]\oplus\Phi(\syz_R^dk).
\end{equation}
It is observed from \eqref{1e} and \eqref{2e} that $\lv_{\lcm(R)}^{\Psi N}(\Psi(\syz_\s^{d+1}k))=\lv_{\lcm(R)}^{\Psi N}(\syz_R^dk)$ and $\lv_{\lcm(\s)}^{\Phi M}(\Phi(\syz_R^dk))=\lv_{\lcm(\s)}^{\Phi M}(\syz_\s^{d+1}k)$, respectively.
Consequently we obtain
\begin{align*}
\lv_{\lcm(R)}^M(\syz_R^dk)
&=\lv_{\lcm(\s)}^{\Phi M}(\Phi(\syz_R^dk))
=\lv_{\lcm(\s)}^{\Phi M}(\syz_\s^{d+1}k)
=\lv_{\lcm(\s)}^{\syz_\s M}(\syz_\s^{d+1}k),\\
\lv_{\lcm(\s)}^N(\syz_{\s}^{d+1}k)
&=\lv_{\lcm(R)}^{\Psi N}(\Psi(\syz_\s^{d+1}k))
=\lv_{\lcm(R)}^{\Psi N}(\syz_R^dk)
=\lv_{\lcm(R)}^{N/yN}(\syz_R^dk),
\end{align*}
which completes the proof of the proposition.
\end{proof}

Using Lemma \ref{dense} again, we get relationships of generation times in $\lcm(R)$ and $\lcm(\s)$.

\begin{prop}\label{prop1}
The following statements hold true.
\begin{enumerate}[\rm(1)]
\item
If $M \in \lcm(R)$ is a strong generator, then so is $\syz_\s M\in\lcm(\s)$, and $\gt_{\lcm(R)}(M) = \gt_{\lcm(\s)}(\syz_\s M)$.
\item
If $N \in \lcm(\s)$ is a strong generator, then so is $N/yN\in\lcm(R)$, and $\gt_{\lcm(\s)}(N) = \gt_{\lcm(R)}(N/yN)$.
\end{enumerate}
\end{prop}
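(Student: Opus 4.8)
The plan is to exploit Lemma~\ref{dense} to transport strong generation back and forth between $\lcm(R)$ and $\lcm(\s)$, using the crucial fact that the composites $\Psi\Phi$ and $\Phi\Psi$ add only a shifted copy of the input, so that they do not disturb generation times. First I would record the general principle: if $\U\to\V$ and $\V\to\U$ are triangle functors with $\Psi\Phi(M)\cong M\oplus M[1]$, then for any object $X$ and any $n$ one has $X\in\langle M\rangle_{n+1}$ if and only if $X\in\langle M\oplus M[1]\rangle_{n+1}$ (since $M[1]$ is built from $M$ in one step, $\langle M\rangle=\langle M\oplus M[1]\rangle$), and applying a triangle functor to a filtration $X\in\langle M\rangle_{n+1}$ produces a filtration $\Phi X\in\langle\Phi M\rangle_{n+1}$. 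Consequently $\gt(M)$, which is the least $n$ with $\T=\langle M\rangle_{n+1}$, transports along $\Phi$ provided one knows that $\Phi$ is essentially surjective up to summands and that $M$ being a strong generator forces $\Phi M$ to be one too.

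For part~(1), suppose $M\in\lcm(R)$ is a strong generator with $n:=\gt_{\lcm(R)}(M)$, so $\lcm(R)=\langle M\rangle_{n+1}$. Applying $\Phi$ gives $\Phi N\in\langle\Phi M\rangle_{n+1}$ for every $N\in\lcm(R)$; since every object of $\lcm(\s)$ is a direct summand of some $\Phi N$ (because $\Phi\Psi(L)\cong L\oplus L[1]$ shows $L$ is a summand of $\Phi(\Psi L)$), and $\langle\Phi M\rangle_{n+1}$ is closed under direct summands, we get $\lcm(\s)=\langle\Phi M\rangle_{n+1}=\langle\syz_\s M\rangle_{n+1}$. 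Hence $\syz_\s M$ is a strong generator and $\gt_{\lcm(\s)}(\syz_\s M)\le n=\gt_{\lcm(R)}(M)$. For the reverse inequality, apply the symmetric argument with $\Psi$ to $\syz_\s M$: if $\lcm(\s)=\langle\syz_\s M\rangle_{m+1}$ then $\lcm(R)=\langle\Psi\syz_\s M\rangle_{m+1}=\langle M\oplus M[1]\rangle_{m+1}=\langle M\rangle_{m+1}$, so $\gt_{\lcm(R)}(M)\le m$, giving $\gt_{\lcm(R)}(M)\le\gt_{\lcm(\s)}(\syz_\s M)$ and therefore equality. Part~(2) is entirely parallel, interchanging the roles of $\Phi$ and $\Psi$ and using $N/yN=\Psi N$ and $\Phi\Psi(N)\cong N\oplus N[1]$.

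The main obstacle is the bookkeeping lemma that a triangle functor carries the iterated cone construction $\langle-\rangle_n$ forward, i.e. that $\Phi(\X\diamond\Y)\subseteq\langle\Phi\X\rangle\diamond\langle\Phi\Y\rangle$ and hence $\Phi(\langle M\rangle_n)\subseteq\langle\Phi M\rangle_n$; this is routine but needs the observation that $\Phi$ preserves exact triangles, finite direct sums, shifts, and (being additive) passes through direct summands, so it respects every operation used to form $\langle-\rangle_n$. The only genuinely delicate point is the equality $\langle M\rangle_{n+1}=\langle M\oplus M[1]\rangle_{n+1}$ appearing when we unwind $\Psi\Phi(M)\cong M\oplus M[1]$: one must note that $\langle M\rangle=\langle M\oplus M[1]\rangle$ as thick-closure-under-summands-shifts-sums subcategories (not as triangulated subcategories), which is immediate since $M[1]\in\langle M\rangle$ and conversely, and that $\langle-\rangle_{n+1}$ depends only on this closure. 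Once these are in place the argument is a short diagram chase, so I would state the forward-compatibility of $\langle-\rangle_n$ along triangle functors as a one-line remark and then run the four inequalities above.
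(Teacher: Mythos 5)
Your proposal is correct and follows essentially the same route as the paper: transport $\langle M\rangle_{n+1}$ through $\Phi$, absorb arbitrary objects of $\lcm(\s)$ as summands of $\Phi\Psi(-)$, and then run the symmetric argument with $\Psi$ together with $\langle\Psi\Phi M\rangle_n=\langle M\oplus M[1]\rangle_n=\langle M\rangle_n$ to pin down the generation time exactly. The only cosmetic difference is that the paper phrases the reverse direction as a direct contradiction ($\lcm(\s)\ne\langle\Phi M\rangle_n$) rather than as a second inequality, but the content is identical.
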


\begin{proof}
(1) We use Lemma \ref{dense} and adopt its notation.
Put $n=\gt_{\lcm(R)}(M)$.
By definition, it holds that ${\langle M\rangle}_{n+1}=\lcm(R)\ne{\langle M\rangle}_n$.
What we need to prove is that ${\langle\Phi M\rangle}_{n+1}=\lcm(\s)\ne{\langle\Phi M\rangle}_n$.
For each $X\in\lcm(\s)$ we have $\Psi X\in\lcm(R)={\langle M\rangle}_{n+1}$, and $\Phi\Psi X\in{\langle\Phi M\rangle}_{n+1}$.
Since $X$ is a direct summand of $\Phi\Psi X$, it belongs to ${\langle\Phi M\rangle}_{n+1}$.
Therefore, we get $\lcm(\s)={\langle\Phi M\rangle}_{n+1}$.
Suppose that the equality $\lcm(\s)={\langle\Phi M\rangle}_n$ holds.
Taking any $Y\in\lcm(R)$, we have $\Phi Y\in\lcm(\s)={\langle\Phi M\rangle}_n$, and $\Psi\Phi Y\in{\langle\Psi\Phi M\rangle}_n$.
As $Y$ is a direct summand of $\Psi\Phi Y$, it is in ${\langle\Psi\Phi M\rangle}_n$.
Hence $\lcm(R)={\langle\Psi\Phi M\rangle}_n={\langle M\oplus M[1]\rangle}_n={\langle M\rangle}_n$, which is a contradiction.
Thus $\lcm(\s)\ne{\langle\Phi M\rangle}_n$.

(2) An analogous argument to the proof of (1) applies.
\end{proof}

The Orlov spectra and Rouquier dimensions of $\lcm(R)$ and $\lcm(\s)$ coincide:

\begin{cor}\label{cor}
One has the following equalities.
$$
\os\lcm(R)=\os\lcm(\s),\qquad
\dim\lcm(R)=\dim\lcm(\s).
$$
\end{cor}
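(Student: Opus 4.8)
The plan is to read off both equalities directly from Proposition \ref{prop1}, which already carries all the content. For the Orlov spectra I would prove the two inclusions separately. Suppose $n\in\os\lcm(R)$; then there is a strong generator $M$ of $\lcm(R)$ with $\gt_{\lcm(R)}(M)=n$, and Proposition \ref{prop1}(1) tells us that $\syz_\s M$ is a strong generator of $\lcm(\s)$ with $\gt_{\lcm(\s)}(\syz_\s M)=n$, so $n\in\os\lcm(\s)$. This gives $\os\lcm(R)\subseteq\os\lcm(\s)$. The reverse inclusion is obtained symmetrically from Proposition \ref{prop1}(2): a strong generator $N$ of $\lcm(\s)$ with $\gt_{\lcm(\s)}(N)=n$ produces the strong generator $N/yN$ of $\lcm(R)$ with the same generation time $n$. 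Hence $\os\lcm(R)=\os\lcm(\s)$. (Note this argument also handles the degenerate case where one spectrum is empty: if, say, $\lcm(\s)$ had a strong generator, then so would $\lcm(R)$, so the two spectra are simultaneously empty or nonempty.)

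For the Rouquier dimensions I would simply invoke the definition $\dim\T=\inf\os(\T)$ recorded in Section 2. Since $\os\lcm(R)$ and $\os\lcm(\s)$ are equal as subsets of $\Z_{\ge0}$, taking infima yields $\dim\lcm(R)=\dim\lcm(\s)$, with the usual convention that the infimum of the empty set is $\infty$ (so that both dimensions equal $\infty$ in the degenerate case).

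There is no real obstacle here: the statement is a formal consequence of Proposition \ref{prop1}, and the only point worth stating explicitly is that membership in $\os(\T)$ means precisely that some strong generator realizes that generation time — which is exactly the data that Proposition \ref{prop1} transports between $\lcm(R)$ and $\lcm(\s)$ via $M\mapsto\syz_\s M$ and $N\mapsto N/yN$.
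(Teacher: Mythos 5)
Your proof is correct and takes essentially the same approach as the paper's: use Proposition \ref{prop1}(1) and (2) to establish the two inclusions of Orlov spectra, then take infima to deduce the equality of Rouquier dimensions. The only cosmetic difference is that you explicitly address the case of empty spectra, which the paper leaves implicit.
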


\begin{proof}
It suffices to show the first equality, since the second equality follows by taking the infimums of the both sides of the first equality.
Using Proposition \ref{prop1}(1), we obtain
\begin{align*}
\os\lcm(R)
&=\{\gt_{\lcm(R)}(M)\mid\text{$M$ is a strong generator of $\lcm(R)$}\}\\
&=\{\gt_{\lcm(\s)}(\syz_{\s}M)\mid\text{$\syz_{\s}M$ is a strong generator of $\lcm(\s)$}\}\\
&\subseteq\{\gt_{\lcm(\s)}(N)\mid\text{$N$ is a strong generator of $\lcm(\s)$}\}
=\os\lcm(\s).
\end{align*}
A similar argument using Proposition \ref{prop1}(2) shows the opposite inclusion $\os\lcm(\s)\subseteq\os\lcm(R)$.
We thus conclude that $\os\lcm(R)=\os\lcm(\s)$.
\end{proof}

\section{The singularity category of a hypersurface of countable representation type}

In this section, we prove our main results, that is, Theorems \ref{B} and \ref{A} from the Introduction.
We start by the following lemma on exact triangles in a triangulated category (an exact triangle $A\to B\to C\to A[1]$ is simply denoted by $A\to B\to C\rightsquigarrow$).

\begin{lem}\label{ladder}
Let $\T$ be a triangulated category.
Let
$$
X\xrightarrow{{\left(\begin{smallmatrix}f_1\\f_2\end{smallmatrix}\right)}}Y_1\oplus M\xrightarrow{{\left(\begin{smallmatrix}g_1&\alpha\end{smallmatrix}\right)}}N\overset{p}\rightsquigarrow,\qquad
M\xrightarrow{{\left(\begin{smallmatrix}\alpha\\g_2\end{smallmatrix}\right)}}N\oplus Y_2\xrightarrow{\left(\begin{smallmatrix}h_1&h_2\end{smallmatrix}\right)}Z\overset{q}\rightsquigarrow
$$
be exact triangles in $\T$.
Then there exists an exact triangle in $\T$ of the form
$$
X\xrightarrow{\left(\begin{smallmatrix}f_1\\g_2f_2\end{smallmatrix}\right)}Y_1\oplus Y_2\xrightarrow{\left(\begin{smallmatrix}h_1g_1&-h_2\end{smallmatrix}\right)}Z\rightsquigarrow.
$$
\end{lem}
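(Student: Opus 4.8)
The plan is to realize the asserted triangle by completing $\binom{f_1}{g_2f_2}$ to an exact triangle and then comparing it with the second displayed triangle. I will write $(T_1)$ and $(T_2)$ for the first and second triangles of the statement.

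First I would observe that the square
$$
\begin{array}{ccc}
X & \xrightarrow{\,\binom{f_1}{g_2f_2}\,} & Y_1\oplus Y_2 \\[3pt]
\ {\scriptstyle f_2}\downarrow\ & & \ \downarrow{\scriptstyle\left(\begin{smallmatrix}-g_1&0\\ 0&1\end{smallmatrix}\right)} \\[3pt]
M & \xrightarrow{\,\binom{\alpha}{g_2}\,} & N\oplus Y_2
\end{array}
$$
commutes: the $Y_2$-entry is $g_2f_2=g_2f_2$, and the $N$-entry is $-g_1f_1=\alpha f_2$, which holds because $g_1f_1+\alpha f_2=0$ (the composite of the first two morphisms of $(T_1)$ vanishes). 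Complete $\binom{f_1}{g_2f_2}$ to an exact triangle $X\xrightarrow{\binom{f_1}{g_2f_2}}Y_1\oplus Y_2\xrightarrow{r}C\rightsquigarrow$. By the axioms of a triangulated category the square extends to a morphism of triangles from this triangle to $(T_2)$, whose third component is some $\phi\colon C\to Z$; the commutativity of the middle square gives $\phi r=(h_1\ h_2)\left(\begin{smallmatrix}-g_1&0\\ 0&1\end{smallmatrix}\right)=(-h_1g_1\ h_2)=-(h_1g_1\ -h_2)$.

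The heart of the matter is to prove that $\phi$ is an isomorphism. The morphism of triangles produces an exact triangle $\operatorname{cone}(f_2)\to\operatorname{cone}\left(\begin{smallmatrix}-g_1&0\\ 0&1\end{smallmatrix}\right)\to\operatorname{cone}(\phi)\rightsquigarrow$, and since $\operatorname{cone}\left(\begin{smallmatrix}-g_1&0\\ 0&1\end{smallmatrix}\right)=\operatorname{cone}(-g_1\colon Y_1\to N)$, it is enough to see that the induced morphism $\operatorname{cone}(f_2)\to\operatorname{cone}(-g_1)$ is invertible. Here $(T_1)$ enters essentially: extending the square with rows $\binom{f_1}{f_2}$ and $\mathrm{id}_M$ and columns $f_2$ and the projection $Y_1\oplus M\to M$ to a morphism of triangles and passing to cones — equivalently, applying the octahedral axiom to $(T_1)$ — identifies $\operatorname{cone}(f_2\colon X\to M)$ with $\operatorname{cone}(g_1\colon Y_1\to N)$, and one checks that this identification is compatible with the comparison morphism above. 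Granting this, $\operatorname{cone}(\phi)=0$, so $\phi$ is an isomorphism; transporting the triangle $X\xrightarrow{\binom{f_1}{g_2f_2}}Y_1\oplus Y_2\xrightarrow{r}C\rightsquigarrow$ along $\phi$ yields $X\xrightarrow{\binom{f_1}{g_2f_2}}Y_1\oplus Y_2\xrightarrow{-(h_1g_1\ -h_2)}Z\rightsquigarrow$, and negating its last two morphisms (which leaves a triangle exact) gives the required one.

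The step I expect to be the main obstacle is precisely this last compatibility: making it rigorous that the morphism $\operatorname{cone}(f_2)\to\operatorname{cone}(-g_1)$ supplied by the comparison morphism of triangles coincides with the isomorphism extracted from $(T_1)$ calls for a careful diagram chase with attention to signs (note that the five lemma does not apply directly, since $f_2$ itself is not an isomorphism). A more computational alternative is to apply the octahedral axiom twice, to the factorizations $\binom{f_1}{g_2f_2}=\left(\begin{smallmatrix}1&0\\ 0&g_2\end{smallmatrix}\right)\binom{f_1}{f_2}$ and $g_2=(0\ 1)\binom{\alpha}{g_2}$: each produces an exact triangle of the shape $N\to\,\cdot\,\to\operatorname{cone}(g_2)\to N[1]$ whose connecting morphism equals $\alpha[1]$ composed with the canonical morphism $\operatorname{cone}(g_2)\to M[1]$, which forces $C\cong Z$; the same sign bookkeeping — now also using the relation $h_1\alpha=-h_2g_2$ coming from $(T_2)$ — then pins the middle morphism down as $(h_1g_1\ -h_2)$.
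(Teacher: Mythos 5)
The initial construction is sound: the square you exhibit commutes because $g_1f_1+\alpha f_2=0$ (composite of the first two morphisms of $(T_1)$ vanishes), and extending $\binom{f_1}{g_2f_2}$ to a triangle and comparing with $(T_2)$ is a reasonable way to try to locate the asserted triangle. But the step you flag as the ``main obstacle'' is where the argument actually breaks down, and it is worth being precise about why. The claim that ``the morphism of triangles produces an exact triangle $\operatorname{cone}(f_2)\to\operatorname{cone}\left(\begin{smallmatrix}-g_1&0\\ 0&1\end{smallmatrix}\right)\to\operatorname{cone}(\phi)\rightsquigarrow$'' is not a consequence of the axioms for a \emph{given} morphism of triangles. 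The $3\times3$ lemma does produce a diagram whose third row/column is a triangle on cones, but it \emph{constructs} its own filler $C\to Z$ by an octahedral argument; TR3 only hands you \emph{some} filler $\phi$, with no compatibility built in. Since TR3 fillers are not unique, the $\phi$ you picked need not be one that fits into a $3\times3$ diagram, and even if you replaced it by the constructed one, you would still face exactly the identification-of-cones issue you name. As written, the argument therefore has a real gap, not merely a sign to chase.

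Your alternative via two octahedra is the more promising route, and it is genuinely different from the paper's proof. It produces two triangles $N\to\operatorname{cone}\binom{f_1}{g_2f_2}\to\operatorname{cone}(g_2)\rightsquigarrow$ and $N\to Z\to\operatorname{cone}(g_2)\rightsquigarrow$, and you would have to verify that the two connecting maps $\operatorname{cone}(g_2)\to N[1]$ agree; if you trace the signs carefully the first comes out as $\alpha[1]\circ\delta_{g_2}$ and the second as $-\alpha[1]\circ\delta_{g_2}$, which still yields an isomorphism $\operatorname{cone}\binom{f_1}{g_2f_2}\cong Z$ (negating two maps of a triangle preserves exactness), but does not by itself pin down the map $Y_1\oplus Y_2\to Z$ as $(h_1g_1\ \ -h_2)$; an extra argument is needed. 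By contrast, the paper sidesteps all of this with a single octahedron applied to the composite $M\xrightarrow{\left(\begin{smallmatrix}0\\1\\g_2\end{smallmatrix}\right)}Y_1\oplus M\oplus Y_2\xrightarrow{\left(\begin{smallmatrix}g_1&\alpha&0\\0&0&1\end{smallmatrix}\right)}N\oplus Y_2$, whose value is $\binom{\alpha}{g_2}$: the first map has cone $Y_1\oplus Y_2$ with zero connecting map (it is isomorphic to a split triangle, as the paper checks), the second has cone $X[1]$ (it is the direct sum of $(T_1)$ with the trivial triangle on $Y_2$), and the composite has cone $Z$ by $(T_2)$. The resulting octahedron delivers the desired triangle in one stroke, and a diagram chase identifies its two nontrivial morphisms explicitly. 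The ``dummy'' summand $Y_2$ in the middle is exactly the device that keeps all the maps visible and avoids any appeal to compatibility of cones.
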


\begin{proof}
There is an isomorphism
$$
\xymatrix@R=3pc@C=5pc{
M\ar[r]^-{\left(\begin{smallmatrix}
0\\
1\\
0
\end{smallmatrix}\right)}\ar@{=}[d] & Y_1\oplus M\oplus Y_2\ar[r]^-{\left(\begin{smallmatrix}
1 & 0 & 0\\
0 & 0 & 1
\end{smallmatrix}\right)}\ar[d]_{\cong}^-{\left(\begin{smallmatrix}1&0&0\\0&1&0\\0&g_2&1\end{smallmatrix}\right)} & Y_1\oplus Y_2\ar[r]^-{\left(\begin{smallmatrix}
0 & 0
\end{smallmatrix}\right)}\ar[d]_{\cong}^-{\left(\begin{smallmatrix}1&0\\0&-1\end{smallmatrix}\right)} & M[1]\ar@{=}[d] \\
M\ar[r]^-{\left(\begin{smallmatrix}
0\\
1\\
g_2
\end{smallmatrix}\right)} & Y_1\oplus M\oplus Y_2\ar[r]^-{\left(\begin{smallmatrix}
1 & 0 & 0\\
0 & g_2 & -1
\end{smallmatrix}\right)} & Y_1\oplus Y_2\ar[r]^-{\left(\begin{smallmatrix}0&0\end{smallmatrix}\right)} & M[1] 
}
$$
of sequences.
The first row is an exact triangle in $\T$ since it is the direct sum of exact triangles arising from the identity maps of $M$ and $Y_1\oplus Y_2$ (see \cite[Proof of Corollary 1.2.7]{N}).
Hence the second row is an exact triangle in $\T$ as well.
We have a commutative diagram
$$
\xymatrix@R=3pc@C=5pc{
& X\ar@{=}[r]\ar[d]^-{\left(\begin{smallmatrix}
f_1\\
f_2\\
0
\end{smallmatrix}\right)} & X\ar@{..>}[d]^-{\left(\begin{smallmatrix}
s\\
t
\end{smallmatrix}\right)}\\
M\ar[r]^-{\left(\begin{smallmatrix}
0\\
1\\
g_2
\end{smallmatrix}\right)}\ar@{=}[d] & Y_1\oplus M \oplus Y_2\ar[r]^-{\left(\begin{smallmatrix}
1 & 0 & 0\\
0 & g_2 & -1
\end{smallmatrix}\right)}\ar[d]^-{\left(\begin{smallmatrix}g_1&\alpha&0\\0&0&1\end{smallmatrix}\right)} & Y_1\oplus Y_2\ar[r]^{\left(\begin{smallmatrix}0&0\end{smallmatrix}\right)}\ar@{..>}[d]^-{\left(\begin{smallmatrix}u&v\end{smallmatrix}\right)} & M[1]\ar@{=}[d]\\
M\ar[r]^-{\left(\begin{smallmatrix}
\alpha\\
g_2
\end{smallmatrix}\right)} & N \oplus Y_2\ar[r]^-{\left(\begin{smallmatrix}
h_1 & h_2 
\end{smallmatrix}\right)}\ar[d]^-{\left(\begin{smallmatrix}p&0\end{smallmatrix}\right)} & Z\ar[r]^-q\ar@{..>}[d] & M[1]\\
& X[1]\ar@{=}[r] & X[1]
}
$$
where the rows are exact triangles in $\T$, and so is the left column since it is a direct sum of exact triangles (see \cite[Proposition 1.2.1]{N}).
Using the octahedral axiom, we obtain the right column which is an exact triangle in $\T$.
The diagram chasing shows that $\left(\begin{smallmatrix}s\\t\end{smallmatrix}\right)=\left(\begin{smallmatrix}f_1\\g_2f_2\end{smallmatrix}\right)$ and $\left(\begin{smallmatrix}u&v\end{smallmatrix}\right)=\left(\begin{smallmatrix}h_1g_1&-h_2\end{smallmatrix}\right)$.
Thus it is an exact triangle we want.
\end{proof}

Let $(R,\m,k)$ be a complete equicharacteristic local hypersurface of dimension $d$.
Assume that $k$ is uncountable and has characteristic different from two, and that $R$ has {\em countable (Cohen--Macaulay) representation type}, namely, there exist infinitely but only countably many isomorphism classes of indecomposable maximal Cohen--Macaulay $R$-modules. 
Then $f$ is either of the following; see \cite[Theorem 14.16]{LW}.
\begin{align*}
(\A_\infty) &:\ x_0^2+x_2^2+\cdots +x_d^2,\\
(\D_\infty) &:\ x_0^2x_1+x_2^2+\cdots +x_d^2.
\end{align*}
In this case, all objects in $\cm(R)$ are completely classified \cite{BGS,BD,K}.

Now we can state and prove the following result regarding levels in $\lcm(R)$.

\begin{thm}\label{B}
Let $k$ be an uncountable algebraically closed field of characteristic not two.
Let $R$ be a $d$-dimensional complete local hypersurface over $k$ of countable representation type.
Then
$$
\syz_R^dk\in{\langle M\rangle}_2^{\lcm(R)}
$$
for all nonzero objects $M\in\lcm(R)$.
In other words, $\lv_{\lcm(R)}^M(\syz_R^dk)\le1$.
\end{thm}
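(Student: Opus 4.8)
The plan is to reduce the statement to the one-dimensional case via the double branched cover construction of Section 3, and then to perform an explicit computation using the classification of indecomposable maximal Cohen--Macaulay modules over a hypersurface singularity of type $(\A_\infty)$ or $(\D_\infty)$. By Proposition \ref{lev}(1), we have $\lv_{\lcm(R)}^M(\syz_R^dk)=\lv_{\lcm(\s)}^{\syz_\s M}(\syz_\s^{d+1}k)$, and iterating the passage $R\rightsquigarrow\s$ finitely many times, I may assume $d=1$ at the cost of replacing $M$ by a suitable syzygy (note that a hypersurface of countable representation type stays of countable type under this operation, since the defining equation remains of type $(\A_\infty)$ or $(\D_\infty)$ in one more variable). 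Thus it suffices to prove: for $R=k[\![x,y]\!]/(f)$ with $f\in\{x^2,\ x^2y\}$ (the one-variable reductions of $(\A_\infty)$ and $(\D_\infty)$, after renaming), one has $\syz_R k\in\langle M\rangle_2^{\lcm(R)}$ for every nonzero $M\in\lcm(R)$.

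Next I would invoke the classification of indecomposable objects of $\cm(R)$ in these two cases. For $(\A_\infty)$, i.e.\ $R=k[\![x,y]\!]/(x^2)$, the indecomposable maximal Cohen--Macaulay modules are (up to isomorphism) the ideals $(x,y^n)$ for $n\ge 0$ together with $R$ itself; in the stable category the nonzero indecomposables are exactly the $M_n:=(x,y^n)$ for $n\ge1$ and $\syz_R k\cong M_1=(x,y)$ up to shift. For $(\D_\infty)$ the list is slightly longer but still completely explicit (cf.\ \cite{BGS,BD,K}), comprising two countable families plus a few sporadic modules, and $\syz_R k$ is again identified explicitly. The key structural input I would extract from these lists is that each indecomposable nonzero $M$ sits in an exact triangle in $\lcm(R)$ whose other two terms are built from (shifts/sums of) $\syz_R k$ and which, read the right way, exhibits $\syz_R k$ as a summand of a two-term extension $\langle M\rangle_1 * \langle M\rangle_1$ --- for instance the standard short exact sequences $0\to(x,y)\to(x,y^n)\to R/(x,y^{n-1})\to 0$ and their analogues, combined with the fact that the relevant quotients have $\syz_R k$ appearing after one more syzygy. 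It is enough to handle indecomposable $M$, since $\langle M\rangle_2\subseteq\langle M'\rangle_2$ whenever $M'$ is a summand of $M$.

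The technical device that glues these short exact sequences into a single membership statement $\syz_R k\in M\diamond\langle M\rangle$ is Lemma \ref{ladder}: given one triangle expressing a "larger" module as an extension of a smaller one by a cone term, and another triangle relating that cone term back to $M$ and $\syz_R k$, the lemma produces a triangle $X\to Y_1\oplus Y_2\to Z\rightsquigarrow$ with $X,Z$ in the thick closure of $M$ and $\syz_R k$ a direct summand of $Y_1\oplus Y_2$, which is precisely what is needed to conclude $\lv_{\lcm(R)}^M(\syz_R k)\le 1$. So the argument is: (i) reduce to $d=1$; (ii) reduce to $M$ indecomposable; (iii) for each isomorphism class on the (finite) list of "shapes" of indecomposables, write down the one or two defining short exact sequences and feed them to Lemma \ref{ladder}. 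I expect the main obstacle to be the bookkeeping in case $(\D_\infty)$: the classification there has several families and the syzygies of $k$ interact differently with each, so verifying that every indecomposable --- including the sporadic ones and the modules of small rank --- admits the required pair of triangles with the correct maps will be the delicate part, whereas case $(\A_\infty)$ should be almost immediate from the chain of ideals $(x,y)\subseteq(x,y^2)\subseteq\cdots$.
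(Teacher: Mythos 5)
Your proposal follows exactly the paper's strategy: reduce to dimension one via the $R\leftrightarrow\s$ level comparison of Proposition \ref{lev}, invoke the classification of indecomposables in types $(\A_\infty)$ and $(\D_\infty)$, and feed the Schreyer-type exact triangles into Lemma \ref{ladder} to land $\syz_R k$ in $\langle M\rangle_2$. One small slip: for the dimensional reduction you should cite Proposition \ref{lev}(2), which passes from the $(d+1)$-dimensional ring $\s$ \emph{down} to $R$ (realizing the given $R$ as an $\s$ of a lower-dimensional hypersurface), rather than part (1), which goes up; the content is the same but the direction matters for the induction.
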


\begin{proof}
Proposition \ref{lev}(2) reduces to the case $d=1$.
Thus we have the two cases:
$$
\text{(1) }R=k[\![x,y]\!]/(x^2),\qquad
\text{(2) }R=k[\![x,y]\!]/(x^2y).
$$

(1):
Thanks to \cite[4.1]{BGS}, the indecomposable objects of $\lcm(R)$ are the ideals $I_n=(x,y^n)$ with $n\in\Z_{>0}\cup\{\infty\}$, where $I_{\infty}:=(x)$.
By \cite[6.1]{Sc} there exist exact triangles
$$
I_{n}\to I_{n-1}\oplus I_{n+1}\to I_{n}\rightsquigarrow\quad(n\in\Z_{>0}),
$$
where $I_0:=0$.
Applying Lemma \ref{ladder}, we obtain exact triangles
$$
I_{n}\to I_{1}\oplus I_{2n-1}\to I_{n}\rightsquigarrow\quad(n\in\Z_{>0}),
$$
and from \cite[Proposition 2.1]{AIT} we obtain an exact triangle $I_{\infty}\to I_{1}\to I_{\infty}\rightsquigarrow$.
It is observed from these triangles that $\syz k=I_1$ is in ${\langle M\rangle}_2$ for each nonzero object $M\in\lcm(R)$.

(2):
Using \cite[4.2]{BGS}, we get a complete list of the indecomposable objects of $\lcm(R)$:
\begin{align*}
&X=R/(x),
\ \syz_RX,
\ M_0^+=R/(x^2),
\ M_0^-=\syz M_0^+,\\
&M_n^+=\Cok\left(\begin{smallmatrix}
x&y^n\\
0&-x
\end{smallmatrix}\right),
\ M_n^-=\syz M_n^+,
\ N_n^+=\Cok\left(\begin{smallmatrix}
x&y^n\\
0&-xy
\end{smallmatrix}\right),
\ N_n^{-}=\syz N_n^{+} \quad(n\in\Z_{>0}).
\end{align*}
According to \cite[(6.1)]{Sc}, for each $n\in\Z_{>0}$ there are exact triangles
$$
M_n^{\pm}\to N_{n+1}^{\pm}\oplus N_{n}^{\mp}\to M_n^{\mp}\rightsquigarrow,\qquad
N_n^{\pm}\to M_{n}^{\pm}\oplus M_{n-1}^{\mp}\to N_n^{\mp}\rightsquigarrow,
$$
where $N_0^{\pm}:=0$.
Lemma \ref{ladder} gives rise to exact triangles
$$
M_n\to N_1 \oplus N_{2n} \to M_n\rightsquigarrow,\qquad
N_n\to N_1 \oplus N_{2n-1} \to N_n\rightsquigarrow,
$$
where $M_n$ stands for either $M_n^+$ or $M_n^-$, and so on.
Also, by \cite[Proposition 2.1]{AIT} we get an exact triangle $\syz X\to N_{1}^-\to X\rightsquigarrow$.
Thus $\syz k=N_1^-$ is in ${\langle C\rangle}_2$ for any $0\ne C\in\lcm(R)$.
\end{proof}

\begin{proof}[\bf Proof of Theorem \ref{a}]
The assertion is immediate from Theorems \ref{B} and \ref{ragnar}.
\end{proof}

The following example shows that Theorem \ref{a} does not necessarily hold if one replaces $k$ with another nonzero object of the singularity category.

\begin{ex}\label{3}
Let $R=k[\![x,y]\!]/(x^2)$ be a hypersurface over a field $k$.
Then
$$
(x,y^a)R\notin{\left\langle(x,y^b)R\right\rangle}_2^{\ds(R)}
$$
for all positive integers $a,b$ with $a>2b$.
Thus $\lv_{\ds(R)}^{(x,y^b)R}((x,y^a)R)\ge2$.
\end{ex}

\begin{proof}
In view of Theorem \ref{ragnar} we replace $\ds(R)$ with $\lcm(R)$.
Let $I=(x,y^a)R$ and $J=(x,y^b)R$ be ideals of $R$.
Suppose that $I$ belongs to ${\langle J\rangle}_2^{\lcm(R)}$.
Since $\syz J\cong J$, we see that there exists an exact sequence $0 \to J^{\oplus m} \to I\oplus M \to J^{\oplus n} \to 0$ of $R$-modules with $m,n\ge0$.
This induces an exact sequence
$$
\Tor_1^R(R/I,J^{\oplus m}) \to \Tor_1^R(R/I,I)\oplus\Tor_1^R(R/I,M) \to \Tor_1^R(R/I,J^{\oplus n}).
$$
Since $\Tor_1^R(R/I,J)\cong\Tor_2^R(R/I,R/J)$, the first and third Tor modules in the above exact sequence are annihilated by $J$, and hence
\begin{equation}\label{1}
J^2\Tor_1^R(R/I,I)=0.
\end{equation}
The minimal free resolution of $R/I$ is
$$
F=(\cdots\xrightarrow{\left(\begin{smallmatrix}
y^a&x\\
-x&0
\end{smallmatrix}\right)}R^{\oplus2}\xrightarrow{\left(\begin{smallmatrix}
0&-x\\
x&y^a
\end{smallmatrix}\right)}R^{\oplus2}\xrightarrow{\left(\begin{smallmatrix}
y^a&x\\
-x&0
\end{smallmatrix}\right)}R^{\oplus2}\xrightarrow{(x,y^a)}R\to0),
$$
which induces a complex
$$
F\otimes_RR/I=(\cdots\xrightarrow{0}(R/I)^{\oplus2}\xrightarrow{0}(R/I)^{\oplus2}\xrightarrow{0}(R/I)^{\oplus2}\xrightarrow{0}R/I\to0).
$$
Hence $\Tor_1^R(R/I,I)\cong\Tor_2^R(R/I,R/I)=(R/I)^{\oplus2}$.
By \eqref{1} we have $J^2(R/I)^{\oplus2}=0$, which implies that $J^2$ is contained in $I$.
Therefore the element $y^{2b}$ is in the ideal $(x,y^a)R$, but this cannot happen since $a>2b$.
\end{proof}

Recall that a subcategory of a triangulated category is called {\em thick} if it is a triangulated subcategory closed under direct summands.
We denote by $\cmo(R)$ the subcategory of $\lcm(R)$ consisting of maximal Cohen--Macaulay $R$-modules that are free on the punctured spectrum of $R$.
The category $\cmo(R)$ is a thick subcategory of $\lcm(R)$, and in particular it is a triangulated category.
For a subcategory $\X$ of $\lcm(R)$ we denote by $\ind\X$ the set of nonisomorphic indecomposable objects of $\lcm(R)$ that belong to $\X$.
We can now state and prove the following result concerning relative Rouquier dimensions in $\lcm(R)$.

\begin{thm}\label{A}
Let $k$ be an uncountable algebraically closed field of characteristic not two.
Let $R$ be a $d$-dimensional complete local hypersurface over $k$ of countable representation type.
Let $\T\ne0$ be a thick subcategory of $\lcm(R)$, and let $\X$ be a subcategory of $\T$ closed under finite direct sums, direct summands and shifts.
Then:
\begin{enumerate}[\rm(1)]
\item
$\T$ coincides with either $\lcm(R)$ or $\cmo(R)$.
\item
\begin{enumerate}[\rm(a)]
\item
When $\T=\lcm(R)$, one has
$$
\dim_\X\T=
\begin{cases}
0 & (\text{if } \X=\T),\\
1 & (\text{if } \X \neq \T\text{ and } \X \nsubseteq \cmo(R)),\\
\infty & (\text{if } \X \subseteq \cmo(R)).
\end{cases}
$$
\item
When $\T=\cmo(R)$, one has
$$
\dim_\X\T=
\begin{cases}
0 & (\text{if } \X=\T),\\
1 & (\text{if } \X\neq \T\text{ and }\#\ind \X =\infty),\\
\infty & (\text{if }\#\ind \X<\infty).
\end{cases}
$$
\end{enumerate}
\end{enumerate}
\end{thm}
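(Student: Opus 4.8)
The plan is to reduce to Krull dimension one and then read the answer off the explicit classification of $\cm(R)$ and of the exact triangles among its indecomposables. In both cases $(\A_\infty)$ and $(\D_\infty)$ one may split off a pure square, writing $f=f'+x_d^2$, so that $R\cong{R'}^{\sharp}$ for a $(d-1)$-dimensional hypersurface $R'$ of the same type; iterating, it suffices to treat $d=1$. To pass from $R'$ to ${R'}^{\sharp}$ I will use that the functors $\Phi,\Psi$ of Lemma \ref{dense}, being triangle functors that are equivalences up to direct summands, induce mutually inverse bijections between the lattices of thick subcategories of $\lcm(R)$ and of $\lcm(\s)$; since a syzygy over $\s$ of a module free on the punctured spectrum is again free on the punctured spectrum (and dually for $N\mapsto N/yN$), these bijections match $\cmo(R)$ with $\cmo(\s)$, and an argument in the style of Propositions \ref{lev} and \ref{prop1} shows that they respect the conditions $\X=\T$, $\X\subseteq\cmo$, $\#\ind\X=\infty$ and the relative dimensions occurring in the statement. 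So from now on $R=k[\![x,y]\!]/(x^2)$ or $R=k[\![x,y]\!]/(x^2y)$.

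For part (1), I would first record that the thick subcategory generated by any single nonzero object of $\cmo(R)$ is all of $\cmo(R)$; this follows from the triangles of \cite{Sc} together with Lemma \ref{ladder}, exactly as in the proof of Theorem \ref{B} (in case $(\A_\infty)$, $I_n\to I_1\oplus I_{2n-1}\to I_n\rightsquigarrow$ produces $I_1$, and the nearest-neighbour triangles then reach every $I_m$). Now let $\T\ne0$ be thick. It contains a nonzero indecomposable, hence contains $\syz_R k$ by Theorem \ref{B}, and $\syz_R k\in\cmo(R)$; therefore $\cmo(R)\subseteq\T$. If $\T$ is strictly larger, it contains an indecomposable outside $\cmo(R)$, i.e.\ one of the ``limit'' modules ($I_\infty$ in case $(\A_\infty)$; $X$ or $\syz X$ in case $(\D_\infty)$), and being thick it then contains all of them; since the limit modules together with the indecomposables of $\cmo(R)$ exhaust the list of indecomposables, $\T=\lcm(R)$.

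For part (2) the entries $0$ and $\infty$ are routine. If $\X=\T$ then $\langle\X\rangle_1=\langle\T\rangle=\T$, so $\dim_\X\T=0$; in all other cases $\langle\X\rangle_1=\X\ne\T$, so $\dim_\X\T\ge1$. When $\T=\lcm(R)$ and $\X\subseteq\cmo(R)$, every $\langle\X\rangle_n$ stays inside $\cmo(R)\subsetneq\lcm(R)$, so $\dim_\X\T=\infty$. When $\T=\cmo(R)$ and $\#\ind\X<\infty$, then $\X=\langle M\rangle$ for the single module $M=\bigoplus\ind\X$, and $M$ is not a strong generator of $\cmo(R)$ because $\dim\cmo(R)=\dim\dso(R)=\infty$ by \cite{AIT,DT}; hence $\dim_\X\T=\infty$.

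The heart of the matter is the value $1$, i.e.\ proving $\langle\X\rangle_2=\T$ in the two remaining cases. If $\T=\lcm(R)$ and $\X\nsubseteq\cmo(R)$, pick a limit module $L\in\X$; the claim is $\langle L\rangle_2=\lcm(R)$. In case $(\A_\infty)$, $L=I_\infty$ is the class of $R/(x)$ in $\ds(R)\cong\lcm(R)$, and for each $n$ the short exact sequence $0\to R/(x)\xrightarrow{y^n}R/(x)\to R/(x,y^n)\to0$ yields an exact triangle $I_\infty\to I_\infty\to I_n\rightsquigarrow$ (using $\syz_R(R/I_n)\cong I_n$), so every indecomposable lies in $\langle L\rangle_2$; case $(\D_\infty)$ requires a parallel, longer analysis realizing each $M_n^{\pm}$ and $N_n^{\pm}$ as a one-step extension of $X$ and $\syz X$. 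If instead $\T=\cmo(R)$ and $\#\ind\X=\infty$, I will need a strengthening of the triangle calculus of Theorem \ref{B}: combining the triangles of \cite{Sc} through Lemma \ref{ladder} should produce, for each $n$ and each $1\le a\le n-1$, an exact triangle $I_n\to I_{n-a}\oplus I_{n+a}\to I_n\rightsquigarrow$ (with analogues in case $(\D_\infty)$), whence $I_m\in\langle I_n\rangle_2$ whenever $1\le m\le 2n-1$; since $\ind\X$ contains $I_n$ with $n$ arbitrarily large, it follows that $\langle\X\rangle_2=\cmo(R)$. I expect this last case --- producing enough ``spread-out'' triangles, and doing the bookkeeping for the families $M_n^{\pm},N_n^{\pm}$ in case $(\D_\infty)$ --- to be the most delicate step; afterwards everything transports back to general $d$ along $\Phi$.
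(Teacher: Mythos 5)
Your overall structure matches the paper's, and the decisive case (\(\T=\cmo(R)\), \(\#\ind\X=\infty\)) is handled by essentially the same triangle \(I_m\to I_n\oplus I_{2m-n}\to I_m\rightsquigarrow\) that the paper uses, just reparametrized. Where you genuinely diverge is in parts~(1) and~(2a): the paper disposes of these by citing the thick-subcategory classification of \cite[Theorem 6.8]{T3} together with \cite[Theorem 1.1]{AIT}, whereas you argue directly from Theorem~\ref{B} and the explicit triangles among the \(I_n\) (resp.\ \(M_n^{\pm},N_n^{\pm}\)). Your route is more self-contained and exposes the mechanism (``any nonzero thick \(\T\) swallows \(\syz^d_Rk\), hence all of \(\cmo(R)\); anything more pulls in a limit module and hence everything''), and the short exact sequence \(0\to R/(x)\xrightarrow{y^n}R/(x)\to R/(x,y^n)\to 0\) giving \(I_n\in\langle I_\infty\rangle_2\) in type \((\A_\infty)\) is a clean alternative to invoking \cite{AIT}. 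For the \(\#\ind\X<\infty\) case you use \(\dim\cmo(R)=\infty\) directly; the paper instead derives it on the spot from \cite[Theorem 1.1(2)]{DT} and the non-isolated singularity --- both are fine and not circular, since those equalities predate Theorem~\ref{A}.

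Two caveats. First, your reduction to \(d=1\) is asserted rather than carried out: you claim \(\Phi,\Psi\) induce a bijection of thick subcategories matching \(\cmo(R)\) with \(\cmo(\s)\) and respecting all the conditions in the statement (\(\X=\T\), \(\X\subseteq\cmo\), \(\#\ind\X\), \(\dim_\X\T\)). This is plausible and in the spirit of Propositions~\ref{lev} and~\ref{prop1}, but it involves several separate verifications (in particular that \(\Phi\) and \(\Psi\) preserve ``free on the punctured spectrum'' and that they preserve the cardinality of \(\ind\X\) and the relative dimension for an \emph{arbitrary} subcategory \(\X\), not just for a single generator); the paper sidesteps most of this by quoting results of \cite{T3,AIT} that are already stated in arbitrary dimension. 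Second, you explicitly leave the \((\D_\infty)\) bookkeeping in part~(2a) and the ``spread-out'' triangle calculus as to-do items. The paper does prove the needed triangles
\[
M_m\to M_n\oplus M_{2m-n}\to M_m\rightsquigarrow,\quad
M_m\to N_n\oplus N_{2m-n+1}\to M_m\rightsquigarrow,\quad
N_m\to M_n\oplus M_{2m-n-1}\to N_m\rightsquigarrow,\quad
N_m\to N_n\oplus N_{2m-n}\to N_m\rightsquigarrow
\]
by iterating Lemma~\ref{ladder} on Schreyer's nearest-neighbour triangles; this is exactly the ``strengthening'' you anticipate, and it does go through, but you would need to write it out. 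So: sound plan, a couple of real computations still to be supplied.
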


\begin{proof}
(1) We combine \cite[Theorem 6.8]{T3} and \cite[Theorem 1.1]{AIT}.
The singular locus of $R$ consists of two points $\p$ and $\m$, and its specialization-closed subsets are $\v(\p)$, $\v(\m)$ and $\emptyset$.
These correspond to the thick subcategories $\lcm(R)$, $\cmo(R)$ and $0$.

(2) Part (a) follows from \cite[Theorem 1.1]{AIT}.
Let us show part (b).
When $\#\ind \X<\infty$, let $X_1,\dots,X_n$ be all the indecomposable objects in $\X$.
Suppose that $\dim_\X\cmo(R)$ is finite, say $m$.
Then it follows that $\cmo(R)={\langle\X\rangle}_{m+1}={\langle X\rangle}_{m+1}$, where $X:=X_1\oplus\cdots\oplus X_n\in\cmo(R)$.
Hence $\cmo(R)$ has finite Rouquier dimension.
By \cite[Theorem 1.1(2)]{DT}, the local ring $R$ has to have at most an isolated singularity.
However, in either case of the types $(\A_\infty)$ and $(\D_\infty)$ we see that the nonmaximal prime ideal $(x_0,x_2,\dots,x_d)R$ belongs to the singular locus of $R$, which is a contradiction.
Consequently, we obtain $\dim_\X\cmo(R)=\infty$.

From now on we consider the case where $\X\ne\T=\cmo(R)$ and $\#\ind\X=\infty$.
We adopt the same notation as in the proof of Theorem \ref{B}.

Assume that $R$ has type $(A_\infty)$.
As $\X$ is a proper subcategory, we can find a positive integer $n$ such that $I_n\notin\X$.
Since there are infinitely many indecomposable objects in $\X$, we can also find an integer $m>n$ such that $I_m\in\X$.
There exists an exact triangle
$$
I_{m}\to I_{n}\oplus I_{2m-n}\to I_{m}\rightsquigarrow
$$
in $\T=\cmo(R)$, which shows that $I_n$ belongs to ${\langle \X\rangle}_2$.
Therefore, we get $\dim_\X\T\le1$.
Since $\langle\X\rangle=\X\neq \T$, we have $\dim_\X\T\ne 0$.
Consequently, we obtain $\dim_\X\T=1$.

Suppose that $R$ is of type $(D_\infty)$.
Similarly as above, we find two integers $m>n>0$ such that neither $M_n$ nor $N_n$ belongs to $\X$ and either $M_m$ or $N_m$ is in $\X$.
When $M_m$ belongs to $\X$, there are exact triangles
$$
M_{m}\to M_{n}\oplus M_{2m-n}\to M_{m}\rightsquigarrow,\qquad
M_{m}\to N_{n}\oplus N_{2m-n+1}\to M_{m}\rightsquigarrow.
$$
When $N_m$ is in $\X$, we have exact triangles
$$
N_{m}\to M_{n}\oplus M_{2m-n-1}\to N_{m}\rightsquigarrow,\qquad
N_{m}\to N_{n}\oplus N_{2m-n}\to N_{m}\rightsquigarrow.
$$
In either case, both $M_n$ and $N_n$ belong to ${\langle \X\rangle}_2$.
It follows that $\dim_\X\T\le1$.
As $\langle\X\rangle=\X\neq \T$, we have $\dim_\X\T\ne 0$.
Now we conclude $\dim_\X\T=1$.
\end{proof}

\begin{proof}[\bf Proof of Theorem \ref{b}]
Theorems \ref{A} and \ref{ragnar} immediately deduce the assertion.
\end{proof}

As a corollary of Theorem \ref{A}, we calculate the Orlov spectra and Rouquier dimensions of $\lcm(R)$ and $\cmo(R)$ for a hypersurface $R$ of countable representation type.

\begin{cor}\label{x}
Let $k$ be an algebraically closed uncountable field of characteristic not two. 
Let $R$ be a $d$-dimensional complete local hypersurface over $k$ having countable representation type.
Then one has the following equalities.
\begin{enumerate}[\rm(1)]
\item
$\os\lcm(R)=\{1\}$.
\item
$\os\cmo(R)=\emptyset$.
\end{enumerate}
In particular, $\dim\lcm(R)=1$ and $\dim\cmo(R)=\infty$.
\end{cor}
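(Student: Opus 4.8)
The plan is to deduce both spectra directly from Theorem \ref{A} by the observation that, for a single object $M\in\T$, one has $\gt_\T(M)=\dim_{\langle M\rangle}\T$ --- indeed ${\langle\langle M\rangle\rangle}_n={\langle M\rangle}_n$ for every $n$ (both are built from $\langle M\rangle$ by the same recursion), so the two infima defining these quantities agree --- and then by feeding $\X=\langle M\rangle$ into the trichotomies of Theorem \ref{A}(2). The only input that is not purely formal is the periodicity theorem for hypersurfaces: over a hypersurface every maximal Cohen--Macaulay module is periodic of period at most two in the stable category, so for any object $M$ the shifts $M[j]$, $j\in\Z$, produce at most the two objects $M$ and $M[1]$; consequently $\langle M\rangle$ has only finitely many indecomposable objects, namely the indecomposable direct summands of $M$ and of $M[1]$.

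First I would handle $\lcm(R)$. For an arbitrary $M$ we have $\#\ind\langle M\rangle<\infty$, whereas $\lcm(R)$ has infinitely (and countably) many indecomposable objects by the hypothesis of countable representation type; hence $\langle M\rangle\ne\lcm(R)$. Since $\gt_{\lcm(R)}(M)=\dim_{\langle M\rangle}\lcm(R)$, Theorem \ref{A}(2a) gives $\gt_{\lcm(R)}(M)=1$ when $M\notin\cmo(R)$ and $\gt_{\lcm(R)}(M)=\infty$ when $M\in\cmo(R)$. Thus the strong generators of $\lcm(R)$ are exactly the objects not lying in $\cmo(R)$, and each of them has generation time $1$; such objects exist because the thick subcategories $0$, $\cmo(R)$, $\lcm(R)$ from Theorem \ref{A}(1) are pairwise distinct, so $\cmo(R)\subsetneq\lcm(R)$. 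Therefore $\os\lcm(R)=\{1\}$ and $\dim\lcm(R)=\inf\{1\}=1$.

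Next I would handle $\cmo(R)$. Take any object $M$ of $\cmo(R)$; as $\cmo(R)$ is thick we have $\langle M\rangle\subseteq\cmo(R)$, and again $\#\ind\langle M\rangle<\infty$, while $\cmo(R)$ has infinitely many indecomposable objects (e.g.\ the ideals $I_n$ appearing in the proof of Theorem \ref{B} in type $(\A_\infty)$, or the modules $M_n^\pm$, $N_n^\pm$ in type $(\D_\infty)$), so in particular $\langle M\rangle\ne\cmo(R)$. Now Theorem \ref{A}(2b) applied with $\X=\langle M\rangle$ and $\#\ind\langle M\rangle<\infty$ yields $\dim_{\langle M\rangle}\cmo(R)=\infty$, that is, $\gt_{\cmo(R)}(M)=\infty$. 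Hence $\cmo(R)$ admits no strong generator, so $\os\cmo(R)=\emptyset$ and $\dim\cmo(R)=\inf\emptyset=\infty$.

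I do not expect a genuine obstacle here: once Theorem \ref{A} is granted, the corollary is just a matter of translating relative dimensions of $\langle M\rangle$ into generation times of $M$, the only external facts being periodicity over hypersurfaces and the classification-level remark that $\cmo(R)$ has infinitely many indecomposables. The point deserving the most care is merely checking that the cases of the trichotomies in Theorem \ref{A}(2) are triggered correctly, i.e.\ that $\langle M\rangle$ equals neither $\lcm(R)$ nor $\cmo(R)$, which is precisely what the finiteness of $\#\ind\langle M\rangle$ secures.
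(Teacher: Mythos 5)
Your proof is correct and follows the same overall architecture as the paper's: both feed $\X=\langle M\rangle$ into the trichotomies of Theorem \ref{A}(2), using the identity $\gt_\T(M)=\dim_{\langle M\rangle}\T$. The genuine difference is in how you establish that $\langle M\rangle\ne\lcm(R)$ for every $M\in\lcm(R)$ (the paper's statement (a)). The paper appeals to \cite[Propositions 2.4 and 2.5]{radius} to conclude $\dim\lcm(R)>0$; you instead invoke Eisenbud's two-periodicity of MCM modules over a hypersurface to deduce $\#\ind\langle M\rangle<\infty$, and then compare with the infinite (countable) number of indecomposables in $\lcm(R)$. Your route is more elementary and self-contained, and it has the pleasant feature that the same finiteness observation is exactly what is needed again for part (2) (both you and the paper use $\#\ind\langle M\rangle<\infty$ there, though the paper leaves its justification implicit). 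One small remark: for the existence of $M\in\lcm(R)\setminus\cmo(R)$, the paper simply notes that $R$ does not have an isolated singularity, whereas you argue via the distinctness of the thick subcategories $0$, $\cmo(R)$, $\lcm(R)$ from the classification; both are fine, but the paper's phrasing is the more direct one, since Theorem \ref{A}(1) as literally stated does not assert the two categories are distinct --- that distinctness is extracted from the bijection with specialization-closed subsets of the singular locus used in its proof.
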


\begin{proof}
(1) Applying Theorem \ref{A}(2a) for $\X=\langle M\rangle$, we see that it suffices to verify that
\begin{enumerate}[(a)]
\item
$\langle M\rangle\ne\lcm(R)$ for all $M\in\lcm(R)$, and
\item
$\langle M\rangle\ne\lcm(R)$ for some $M\in\lcm(R)\setminus\cmo(R)$.
\end{enumerate}
Statement (a) is equivalent to saying that $\dim\lcm(R)>0$, which follows from \cite[Propositions 2.4 and 2.5]{radius}.
There exists an object $M\in\lcm(R)\setminus\cmo(R)$ since $R$ does not have an isolated singularity, and hence (b) follows from (a).

(2) Let $M\in\cmo(R)$ be an object, and let $\X=\langle M\rangle$ be a subcategory of $\cmo(R)$.
Then it is observed that $\#\ind\X<\infty$, so applying Theorem \ref{A}(2b), we obtain $\dim_\X\cmo(R)=\infty$, which means $\gt_{\cmo(R)}(M)=\infty$.
Thus the assertion follows.
\end{proof}

\begin{proof}[\bf Proof of Corollary \ref{c}]
Combining Corollary \ref{x} with Theorem \ref{ragnar} yields the assertion.
\end{proof}

\begin{ac}
The authors thank the referee for reading the paper carefully and giving useful suggestions.
\end{ac}

\end{document}